\documentclass[12pt,a4paper]{amsart}

\makeatletter
\long\def\unmarkedfootnote#1{{\long\def\@makefntext##1{##1}\footnotetext{#1}}}
\makeatother

 \setlength{\oddsidemargin}{0cm}
 \setlength{\evensidemargin}{0cm}
 \setlength{\textwidth}{16cm} \setlength{\textheight}{23cm}

\setlength{\voffset}{-1cm}
\usepackage{pgf,tikz,pgfplots}
\pgfplotsset{compat=1.15}
\usepackage{mathrsfs}
\usetikzlibrary{arrows}

%
\usepackage{amsmath,amssymb}
\usepackage{enumerate,amssymb}
\usepackage{color}
\theoremstyle{plain}
\newtheorem{thm}{Theorem}[section]

\newtheorem{lemma}[thm]{Lemma}

\newtheorem{prop}[thm]{Proposition}
\newtoks\prt
\newtheorem{proclaim}[thm]{\the\prt}
\theoremstyle{definition}

\def\eqn#1$$#2$${\begin{equation}\label#1#2\end{equation}}

\numberwithin{equation}{section}

\headheight=12pt

\def\C{\mathcal{C}}

\def\dist{\operatorname{dist}}

\def\epsilon{\varepsilon}
\def\en{\mathbb N}
\def\er{\mathbb R}

\def\haus{\mathcal{H}}

\def\loc{\operatorname{loc}}

\def\mir1{\mathcal L_1}

\def\oint{-\hskip -13pt \int}

\def\phi{\varphi}

\def\r{\widetilde{r}}

\def\sgn{\operatorname{sgn}}

\def\rn{\mathbb R^n}

\def\sgn{\operatorname{sgn}}

\def\V{\mathbb V}
\def\x{\widetilde{x}}

\newtoks\by
\newtoks\paper
\newtoks\book
\newtoks\jour
\newtoks\yr
\newtoks\pages
\newtoks\vol
\newtoks\publ
\def\ota{{\hbox\vol{???}}}
\def\cLear{\by=\ota\paper=\ota\book=\ota\jour=\ota\yr=\ota
\pages=\ota\vol=\ota\publ=\ota}
\def\endpaper{\the\by, {\the\paper},
\textit{\the\jour} \textbf{\the\vol} (\the\yr), \the\pages.\cLear}
\def\endbook{\the\by, \textit{\the\book}, \the\publ.\cLear}
\def\endprep{\the\by, \textit{\the\paper}, \the\jour.\cLear}
\def\endyearprep{\the\by, \textit{\the\paper}, \the\jour, (\the\yr).\cLear}
\def\name#1#2{#2 #1}
\def\nom{ \rm no. }

\newcommand{\seb}[1]{{ #1}}

\title{Injectivity in second-gradient Nonlinear Elasticity}

\author{D.~Campbell, S.~Hencl, A.~Menovschikov and S.~Schwarzacher}
\thanks{The first and the third author were supported by the grant GA\v{C}R 20-19018Y. The second author was supported by the grant GA\v{C}R P201/21-01976S. The fourth author was supported by the grant PRIMUS/19/SCI/01, the program \seb{GA\v{C}R GJ19-11707Y and by the ERC-CZ grant LL2105}}
\address{Department of Mathematics, University of Hradec Kr\'alov\'e, Rokitansk\'eho 62, 500 03 Hradec Kr\'alov\'e, Czech Republic}
\address{Faculty of Economics, University of South Bohemia, Studentsk\' a 13, Cesk\' e Budejovice, Czech Republic}
\email{\tt campbell@karlin.mff.cuni.cz, menovschikovmath@gmail.com}
\address{Department of Mathematical Analysis, Charles University, So\-ko\-lovsk\'a 83, 186~00 Prague 8, Czech Republic}
\email{\tt hencl@karlin.mff.cuni.cz, schwarz@karlin.mff.cuni.cz}



\date{\today}

\begin{document}

\begin{abstract}
We study injectivity for models of Nonlinear Elasticity that involve the second gradient. We assume that $\Omega\subset\rn$ is a domain, $f\in W^{2,q}(\Omega,\rn)$ satisfies 
$|J_f|^{-a}\in L^1$ and that $f$ equals a given homeomorphism on $\partial \Omega$. Under suitable conditions on $q$ and $a$ we show that $f$ must be a homeomorphism. 
As a main new tool we find an optimal condition for $a$ and $q$ that imply that $\haus^{n-1}(\{J_f=0\})=0$ and hence $J_f$ cannot change sign. \seb{We further specify in dependence of $q$ and $a$ the maximal Hausdorff dimension $d$ of the critical set $\{J_f=0\}$.
The sharpness of our conditions for $d$ is demonstrated by constructing respective counterexamples.} 
\end{abstract}

\maketitle

\section{Introduction}

	Our aim is to study models of Nonlinear Elasticity that involve second gradient and in particular we would like to study injectivity of such mappings. 
	Let $\Omega\subset\rn$ be a domain and let $f:\Omega\to\rn$. 
In this paper we study mappings with finite energy 
$$
E(f):=
\int_{\Omega} \bigl(W(Df(x))+\Psi(D^2 f(x))\bigr)\; dx
$$
such that there are $q\geq 1$ and $a>0$ so that  
\eqn{assume}
$$
W(Df)\geq \frac{1}{|J_f|^a}\text{ and }\Psi(D^2 f)\geq |D^2f|^q, 
$$
i.e. $f\in W^{2,q}$ and $J_f^{-a}\in L^1$. Moreover, we assume that the mapping $f$ is equal to a given homeomorphism $f_0$ on $\partial \Omega$. 
	 Models with the second gradient were introduced by Toupin \cite{T}, \cite{T2}
	and later considered by many other authors, see e.g. Ball, Curie, Olver \cite{BCO}, Ball, Mora-Corral \cite{BMC}, M\"uller \cite[Section 6]{Mbook}, Ciarlet \cite[page 93]{Ci} and references given there. 
	The contribution of the higher gradient is usually connected with interfacial energies and is used to model various phenomena like elastoplasticity or damage. \seb{Most relevant of \eqref{assume} is the penalisation of compression (by the negative powers of the Jacobian) that makes such energies a physical set up for the so-called regime of {\em large deformations}. Examples of recent usage are their connection to small strain-models~\cite{FriKru}, to thermovsicoelasticity~\cite{MiRo} and for the derivation of unsteady models involving inertia and interactions with fluids~\cite{BenKamSch}. For more references see the recent monograph~\cite{KrRo}. }

Injectivity in models of Nonlinear Elasticity is a crucial property as it corresponds to the ``non-interpenetration of matter". This question has attracted a huge amount of attention in the past for models with first order gradient. Following the pioneering work of J. Ball \cite{Ball} we can ask that our mapping has finite energy where the energy functional $\int_{\Omega} W(Df)$ contains special terms (like ratio of powers of $Df$, $\operatorname{adj} Df$ and $J_f$)
and any mapping with finite energy and reasonable boundary data is a homeomorphism (the reader is referred to e.g.\
\cite{HR,IS,MaVi} and \cite{Sv} for related results). There are other possible approaches that give us only some injectivity a.e.\ like Ciarlet and Ne\v{c}as condition \cite{CN} (see e.g. see e.g.~\cite{BK,BHM,GKMS,MTY,T}) or the (INV) condition of  M\"uller and Spector \cite{MS} (see e.g.\  \cite{BHM,ConDeL2003,HMC,MST,SwaZie2002,SwaZie2004, DHM}).

It is possible to show that for reasonable values of $q$ and $a$ (see \eqref{assume}) in second order models we obtain that the mapping with finite energy is indeed a homeomorphism. 
As far as we know the only result in this direction is the following result of T.J. Healey and Kr\"omer \cite{HK} (see also \cite{PH}): 

\prt{Theorem}
\begin{proclaim}
Let $q>n$ and let $(1-\frac{n}{q})a\geq n$. Let $\Omega\subset\rn$ be a domain and let $f_0:\overline{\Omega}\to\rn$ be a given homeomorphism. 
Assume that $f\in W^{2,q}(\Omega,\rn)$ is a mapping such that $|J_f|^{-a}\in L^1(\Omega)$ and $f=f_0$ on $\partial \Omega$ in the sense of traces. 
Then $f$ is a homeomorphism. 
\end{proclaim}

The main ingredient of their proof is the following: Since $f\in W^{2,q}$ with $q>n$ we obtain that $Df\in C^{0,1-\frac{n}{q}}$ and thus $J_f\in C^{0,1-\frac{n}{q}}$. 
We claim that the set $\{J_f=0\}$ is empty and thus $f$ is locally a homeomorphism and since it agrees with homeomorphism on the boundary it is a global homeomorphism. Assume for contrary that there is $x_0\in\Omega$ with $J_f(x_0)=0$, by using H\"older continuity of $J_f$ we get
$$
\int_{\Omega}\frac{1}{|J_f|^a}\geq C\int_{\Omega}\frac{1}{|x-x_0|^{a(1-\frac{n}{q})}}
$$ 
and the last integral is infinite since $a(1-\frac{n}{q})\geq n$. 

The above result is very important as a first breakthrough result in this direction. However, the condition $(1-\frac{n}{q})a\geq n$ is \seb{rather restrictive; in particular, the relevant case $q=2$ is excluded even in the planar case $n=2$}. Our first result \seb{extends and refines these results} and allows us to go with $q$ even below the critical case $q=n$. 

\prt{Theorem}
\begin{proclaim}
\label{main}
For $n\geq 3$ we assume that $a>n-1$ for $q>n$ and that $(1-\frac{n}{q}+\frac{1}{n-1})a>1$ for $n-1<q\leq n$. For $n=2$ we assume that $q>\frac{4}{3}$, $a\geq 1$ and $(\frac{3}{2}-\frac{2}{q})a\geq 1$. 
Let $\Omega\subset\rn$ be a Lipschitz domain and let $f_0:\overline{\Omega}\to\rn$ be a given homeomorphism. 
Assume that $f\in W^{2,q}(\Omega,\rn)$ is a mapping such that $|J_f|^{-a}\in L^1(\Omega)$ and $f=f_0$ on $\partial \Omega$ in the sense of traces. 
Then $f$ is a homeomorphism in $\Omega$. 
\end{proclaim}

Unlike the result of Healey and Kr\"omer we cannot show that the \seb{critical} set $\{J_f=0\}$ is empty, \seb{that is a popular property of the case treated there. Nevertheless, as we show below, the we are able to provide a rather direct excess to the critical set $\{J_f=0\}$.} The main new ingredient of our approach is to show that the critical set $\{J_f=0\}$ has zero Hausdorff $\haus^{n-1}$ measure. This is achieved by the following theorem, which \seb{actually provides a precise control on the size of the critical set.}

Let us note that for $f\in W^{2,q}$, $q>n$, we know \seb{(by Sobolev embedding)} that $Df$ is continuous and thus we take a continuous representative of $J_f$, \seb{which allows to make pointwise references to values where} $J_f(x)=0$. 
This is a little bit more \seb{delicate} for $q\leq n$ as $Df$ and $J_f$ \seb{are} defined only a.e.. But we know that $Df\in W^{1,q}$ and hence it has a quasicontinuous representative which is well-defined and has Lebesgue points up to a set of $q$-capacity zero or Hausdorff dimension $n-q$ (see Theorem \ref{qc} below). Therefore our $J_f$ is well-defined up to a set of dimension $n-q$. More precisely our assumption 
($(n-d+\frac{d}{n}-\frac{n}{q}(n-d))>0$ for $q\leq n$) even implies that $J_f$ has Lebesgue points and thus is well-defined up to a set of $\haus^d$ measure zero.  


\seb{The main result of the present paper is the following sharp specification of the Hausdorff dimension of the critical set of functions for which $E(f)<\infty$.}
\prt{Theorem}
\begin{proclaim}\label{technical}
Let $\Omega\subset\er^n$ be an open set, $q>n$, $0<d<n$ and let $(1-\frac{n-d}{q})a\geq n-d$. 
Assume that $f\in W^{2,q}(\Omega,\er^n)$ is a mapping with $|J_f|^{-a}\in L^1(\Omega)$. Then $\haus^d(\{J_f=0\})=0$. 
For $\frac{n^2}{2n-1}<q\leq n$ we assume that 
$(n-d+\frac{d}{n}-\frac{n}{q}(n-d)) a\geq n-d$ and we obtain that 
$\haus^{d}(\{J_f=0\})=0$.

Moreover, for every 
$0<d<n$ such that $(1-\frac{n-d}{q})a<n-d$ there is $C^1$ homeomorphism $f \in W^{2,q}((-1,1)^n, (-1,1)^n)$ with $J^{-a}_f \in L^1((-1,1)^n)$, such that $\mathcal{H}^d(\{J_f=0\})>0$. 
\end{proclaim}

The positive result for $q>n$ is sharp as the counterexample shows. However, there is a gap between the result and counterexample for $q\leq n$. 
We expect that the counterexample might be sharp also for $q\leq n$. 

As a corollary of \seb{the} previous theorem we obtain the following. 

\prt{Corollary}
\begin{proclaim}\label{cor}
Let $\Omega\subset\er^n$ be a domain, $q>n$ and $(1-\frac{1}{q})a\geq 1$ or $\frac{n^2}{2n-1}<q\leq n$ and $(2-\frac{1}{n}-\frac{n}{q}) a\geq 1$. 
Assume that $f\in W^{2,q}(\Omega,\er^n)$ is a mapping with $|J_f|^{-a}\in L^1(\Omega)$. Then either $J_f> 0$ a.e. in $\Omega$ or $J_f< 0$ a.e. in $\Omega$.
\end{proclaim}
\seb{The corollary shows} that $J_f$ does not change sign and hence we can assume that $f$ is a mapping of finite distortion (see e.g. \cite{HK14}). For suitable values of $a$ and $q$ we thus obtain that the distortion function $K_f(x)=\frac{|Df(x)|^n}{J_f(x)}$ is integrable with power $p>n-1$ (or $p\geq 1$ for $n=2$). 
Now we can use known results about mappings of finite distortion (see e.g. \cite{IS}, \cite{MaVi}, \cite{HR}, \cite{HK14} and \cite{Kr}) to conclude that $f$ is open and discrete and thus a homeomorphism and Theorem \ref{main} follows. 

Let us also note that it is possible to estimate the Hausdorff dimension of the image of the critical set (see e.g. Korobkov, Kristensen \cite{KK} and references given there). 

\seb{
The paper is structured as follows. In the next section we collect some preliminary results. This follows Section 3 where we prove the positive results. For that we first show that the zero set of {functions}, which are in $W^{2,b}$ and which have negative integrability has a maximal Hausdorff dimension. This follows the proofs of the positive part of Theorem~\ref{technical} and \ref{main}, as well as Corollary~\ref{cor}. In Section 4 the respective counterexamples are constructed. Both Section 3 and Section 4 rely on different known results but do involve new techniques and refinements. The paper is completed by two further sections. Section 5 where three more counterexamples  are introduced that relate to other known counter examples in the literature (e.g.~\cite{Ball}); in particular it includes the technical counterexample, where the set $\{J_f=0\}$ is shown to be dense. In the final section some positive implications of the developed theory are collected. In particular, some consequences of the {\em analytic results} of the paper to {\em minimizers} of $E$ are investigated.}

\seb{In conclusion it seems that the pure analytic possibilities to control the critical set of the Jacobian might be limited by the here provided results. However, it stays an untouched open problem, whether {\em minimizers} do enjoy better properties, or whether they can be as singular as the analytical counterexamples suggest.
}


\section{Preliminaries}

By $\haus^d$ we denote the $d$-dimensional Hausdorff measure in $\rn$. 

\subsection{Derivative of radial mapping}

\begin{lemma}\label{radial}
Let $\rho:(0,\infty)\to(0,\infty)$ be a strictly monotone function with $\rho\in C^1((0,\infty))$. Then, for the mapping
$$
h(x)=\frac{x}{|x|}\rho(|x|), \quad x\neq 0\ 
,$$
we have for every $x\neq 0$
\eqn{der}
$$
|Dh(x)|\seb{\approx}\max\Bigl\{\frac{\rho(|x|)}{|x|}, |\rho'(|x|)| \Bigr\}\text{ and }
 J_h(x)= \rho'(|x|) \Bigl(\frac{\rho(|x|)}{|x|}\Bigr)^{n-1}\ .
 $$
Moreover, if $\rho\in C^2((0,\infty))$, then
\eqn{secondder}
$$
|D^2h(x)|\seb{\approx} \max\Bigl\{|\rho''(|x|)|,\Bigl|\frac{\rho(|x|)}{|x|^2}-\frac{\rho'(|x|)}{|x|}\Bigr|\Bigr\}\text{ for every }x\neq 0. 
$$
\end{lemma}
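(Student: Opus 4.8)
The plan is to compute the derivative of $h(x) = \frac{x}{|x|}\rho(|x|)$ directly in coordinates and then extract the size of $Dh$, $D^2h$ and the exact value of $J_h$ from the resulting expressions. Write $r = |x|$ and $\omega = x/|x|$, so that $h(x) = \rho(r)\,\omega$. The key structural fact is that at a given point $x \neq 0$ one can choose an orthonormal frame adapted to the splitting $\er^n = \er\omega \oplus \omega^\perp$: in the radial direction $h$ behaves like $t \mapsto \rho(t)$, contributing $\rho'(r)$, while in each of the $n-1$ tangential directions $h$ maps a sphere of radius $r$ to a sphere of radius $\rho(r)$, contributing the factor $\rho(r)/r$. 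Concretely, differentiating $h_i(x) = \rho(|x|)\, x_i/|x|$ gives
$$
\partial_j h_i(x) = \rho'(r)\,\frac{x_i x_j}{r^2} + \frac{\rho(r)}{r}\Bigl(\delta_{ij} - \frac{x_i x_j}{r^2}\Bigr),
$$
which is exactly $\rho'(r)$ on the rank-one projection onto $\er\omega$ plus $\rho(r)/r$ on the complementary projection onto $\omega^\perp$. Hence $Dh(x)$ is a symmetric matrix with eigenvalue $\rho'(r)$ (multiplicity one, eigenvector $\omega$) and eigenvalue $\rho(r)/r$ (multiplicity $n-1$). The operator norm is the largest modulus of an eigenvalue, giving $|Dh(x)| \approx \max\{\rho(r)/r, |\rho'(r)|\}$ (with the implied constants coming only from the equivalence of matrix norms on $\er^{n\times n}$), and the determinant is the product of the eigenvalues, giving exactly $J_h(x) = \rho'(r)\,(\rho(r)/r)^{n-1}$, which is \eqref{der}. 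Note strict monotonicity of $\rho$ guarantees $\rho'$ has a constant sign so this is genuinely nonzero away from isolated points, though that is not needed for the formula itself.

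For the second derivative under the extra hypothesis $\rho \in C^2$, the plan is to differentiate the expression for $\partial_j h_i$ once more. Each term is a product of a scalar function of $r$ (namely $\rho'(r)$, or $\rho(r)/r$) with a rational function of $x$ that is $0$-homogeneous (namely $x_ix_j/r^2$ or $\delta_{ij}-x_ix_j/r^2$). Differentiating the scalar factors in the radial direction produces $\rho''(r)$ and $\frac{d}{dr}\bigl(\rho(r)/r\bigr) = \rho'(r)/r - \rho(r)/r^2$; differentiating the $0$-homogeneous rational factors produces terms of size $\sim 1/r$ multiplying the scalar factors, i.e. contributions of size $|\rho'(r)|/r$ and $\rho(r)/r^2$. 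Collecting, every entry of $D^2h(x)$ is a bounded linear combination (with dimensional constants) of $\rho''(r)$, $\bigl|\rho'(r)/r - \rho(r)/r^2\bigr|$, $|\rho'(r)|/r$ and $\rho(r)/r^2$; and conversely each of the first two appears ``undiluted'' in a suitable directional second derivative (e.g. the purely radial one gives $\rho''$, a mixed radial–tangential one gives $\rho(r)/r^2 - \rho'(r)/r$), so no cancellation can occur simultaneously in all components. The only mildly delicate point is to check that the ``extra'' quantities $|\rho'(r)|/r$ and $\rho(r)/r^2$ are already controlled by $\max\{|\rho''(r)|, |\rho(r)/r^2 - \rho'(r)/r|\}$ up to the terms already present — and indeed $\rho(r)/r^2 = \bigl(\rho(r)/r^2 - \rho'(r)/r\bigr) + \rho'(r)/r$, so it suffices to absorb $|\rho'(r)|/r$, which one can either fold into the $\max$ by a direct two-sided comparison or simply note is dominated because $\rho'(r)/r$ differs from $\rho(r)/r^2$ by the listed difference; after this bookkeeping one arrives at \eqref{secondder}.

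The main obstacle, such as it is, is purely the bookkeeping in the $D^2h$ estimate: making sure that the $\approx$ is two-sided, i.e. that the right-hand side of \eqref{secondder} is not merely an upper bound but also a lower bound for $|D^2h(x)|$. The clean way to see the lower bound is to exhibit two explicit second-order directional derivatives of $h$ at $x$ — the second derivative along the ray through $x$, which equals $\rho''(r)\omega$ in norm $|\rho''(r)|$, and a mixed second derivative $\partial_\tau\partial_\omega h$ for $\tau \in \omega^\perp$, whose radial-tangential block has size $\bigl|\rho(r)/r^2 - \rho'(r)/r\bigr|$ — so that $|D^2h(x)|$ is bounded below by a dimensional constant times each, hence below by their maximum. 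Everything else is the elementary computation sketched above; no compactness, capacity, or integrability input from the rest of the paper is used here, as this lemma is a self-contained computational tool (to be applied later to the radial counterexamples).
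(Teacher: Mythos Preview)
Your treatment of \eqref{der} via the eigenvalue decomposition of $Dh(x)=\rho'(r)\,\omega\otimes\omega+\tfrac{\rho(r)}{r}(I-\omega\otimes\omega)$ is clean and correct; the paper simply cites this part from \cite{HK14}. Your lower bound for $|D^2h|$ via explicit second directional derivatives is also fine.

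The gap is in your upper bound for \eqref{secondder}. You correctly observe that differentiating $Dh$ produces, besides $\rho''$ and $\rho'/r-\rho/r^2$, the ``extra'' contributions $|\rho'|/r$ and $\rho/r^2$ coming from the derivatives of the $0$-homogeneous tensor factors. But your proposed absorption of these extras is circular: you write $\rho/r^2=(\rho/r^2-\rho'/r)+\rho'/r$ and then say it suffices to control $\rho'/r$ --- yet there is no pointwise bound of $|\rho'|/r$ by $\max\{|\rho''|,\,|\rho/r^2-\rho'/r|\}$. Take $\rho(r)=r$: then $\rho'/r=1/r$ while the claimed maximum is $0$ (and indeed $h=\id$, $D^2h=0$, so the lemma holds, but your intermediate estimate does not).

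What actually happens is that the two ``extra'' pieces are not independent. Since the projections sum to the identity, $D(\omega\otimes\omega)=-D(I-\omega\otimes\omega)$, and the offending terms combine:
$$
\rho'(r)\,D(\omega\otimes\omega)+\frac{\rho(r)}{r}\,D(I-\omega\otimes\omega)=\Bigl(\rho'(r)-\frac{\rho(r)}{r}\Bigr)D(\omega\otimes\omega),
$$
which has size $\approx|\rho'/r-\rho/r^2|$ since $|D(\omega\otimes\omega)|\approx 1/r$. With this cancellation your scheme goes through. The paper sidesteps this bookkeeping entirely by using rotational symmetry: it suffices to compute $D^2h$ at one point per sphere, say $y=(|x|,0,\dots,0)$, where the explicit second partials reduce exactly to $\rho''(|x|)$ and $\rho'(|x|)/|x|-\rho(|x|)/|x|^2$, all other entries vanishing.
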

\begin{proof}
The first part \eqref{der} follows from \cite[Lemma 2.1]{HK14}. To prove \eqref{secondder} we use a direct computation. 
The first order derivatives are 
\begin{align*}
\frac{\partial f_1}{\partial x_1}(x)   
& = \frac{\rho(|x|)}{|x|} + \frac{x_1}{|x|}\rho'(|x|)\frac{x_1}{|x|} - \frac{x_1}{|x|^2}\frac{x_1}{|x|}\rho(|x|)\\
\frac{\partial f_1}{\partial x_2}(x)  
& = \frac{x_1}{|x|}\rho'(|x|)\frac{x_2}{|x|} - \frac{x_1}{|x|^2}\frac{x_2}{|x|}\rho(|x|),
\end{align*}
and the second partial derivatives are
\begin{align*}
\frac{\partial^2 f_1}{\partial x_1^2}(x) 
& =  \frac{x_1\rho'(|x|)}{|x|^2} - \frac{x_1\rho(|x|)}{|x|^3} + \frac{2x_1\rho'(|x|)}{|x|^2} + \frac{x_1^3\rho''(|x|)}{|x|^3} - \frac{2x_1^3\rho'(|x|)}{|x|^4} - \frac{2 x_1\rho(|x|)}{|x|^3} \\
& - \frac{x_1^3\rho'(|x|)}{|x|^4} + \frac{3x_1^3\rho(|x|)}{|x|^5}, \\
\frac{\partial^2 f_1}{\partial x_2^2}(x) 
& =  \frac{x_1\rho'(|x|)}{|x|^2} + \frac{x_1x_2^2\rho''(|x|)}{|x|^3} - \frac{2x_1x_2^2\rho'(|x|)}{|x|^4} - \frac{x_1x_2^2\rho'(|x|)}{|x|^4} - \frac{x_1\rho(|x|)}{|x|^3} + \frac{3 x_1x_2^2\rho(|x|)}{|x|^5} ,\\
\frac{\partial^2 f_1}{\partial x_1 \partial x_2}(x) 
& =  \frac{x_2\rho'(|x|)}{|x|^2} - \frac{x_2\rho(|x|)}{|x|^3} + \frac{x_1^2x_2\rho''(|x|)}{|x|^3} - \frac{2x_1^2x_2\rho'(|x|)}{|x|^4} - \frac{x_1x_2\rho'(|x|)}{|x|^4} + \frac{3 x_1^2x_2\rho(|x|)}{|x|^5}, \\
\frac{\partial^2 f_1}{\partial x_2 \partial x_3}(x) 
& =  \frac{x_1 x_2 x_3\rho''(|x|)}{|x|^3} - \frac{2x_1x_2x_3\rho'(|x|)}{|x|^4} + \frac{3x_1x_2x_3\rho(|x|)}{|x|^5} - \frac{x_1x_2x_3\rho'(|x|)}{|x|^4}.
\end{align*}

By symmetry it is enough to calculate $|D^2f(x)|$ only for a single point on each sphere. Let us fix a point $y=[x_1, 0, \dots, 0]$ with $x_1>0$ and compute \seb{the} second derivatives at this point as
$$
\begin{aligned}
\frac{\partial^2 f_1}{\partial x_1^2}(y) & = \frac{\rho'(|x|)}{|x|} - \frac{\rho(|x|)}{|x|^2} + \frac{2\rho'(|x|)}{|x|} + \rho''(|x|) - \frac{2\rho'(|x|)}{|x|} - \frac{2\rho(|x|)}{|x|^2} - \frac{\rho'(|x|)}{|x|} + \frac{3\rho(|x|)}{|x|^2} \\
& = \rho''(|x|),\\
\frac{\partial^2 f_1}{\partial x_2^2}(y)& = \frac{\rho'(|x|)}{|x|} - \frac{\rho(|x|)}{|x|^2},\quad  
\frac{\partial^2 f_1}{\partial x_1 \partial x_2}(y) = 0,\quad  
\frac{\partial^2 f_1}{\partial x_2 \partial x_3}(y) = 0 .\\
\end{aligned}
$$
Other derivatives are computed similarly and hence \eqref{secondder} follows. 
\end{proof}

\subsection{Jacobian in $W^{1,b}$} 

\prt{Lemma}
\begin{proclaim}\label{Sebastian}
Let $\frac{n^2}{2n-1} < q\leq n$, and set $b=\frac{nq}{n^2-nq+q}$.   
Let $\Omega\subset \rn$ be open and $f\in W^{2,q}(\Omega,\rn)$. Then $J_f\in W^{1,b}(\Omega)$. 
\end{proclaim}
\begin{proof}
We know that $Df\in W^{1,q}$ and hence $Df$ satisfies the ACL condition (see e.g. \cite[Chapter 4.9]{EG}), i.e. all partial derivatives are absolutely continuous on almost all lines parallel to coordinate axes. Let us pick a segment such that all partial derivatives are absolutely continuous there. Then $J_f$ is a sum of products of absolutely continuous functions and hence it is absolutely continuous on this segment. 

To conclude that $J_f\in W^{1,b}$ it is thus enough to show that $D(J_f)\in L^b$ (see e.g. \cite[Chapter 4.9]{EG}). Clearly 
$$
|D(J_f)|\leq C |D^2f|\cdot |Df|^{n-1}
$$
and thus using H\"older's inequality with $p=\frac{q}{b}$ (note that $q>b$) we have
\eqn{kuk2}
$$
\int_{\Omega} |D(J_f)|^b\leq C\Bigl(\int_{\Omega} |D^2 f|^q\Bigr)^{\frac{1}{p}}
\Bigl(\int_{\Omega} |Df|^{(n-1)b\frac{p}{p-1}}\Bigr)^{\frac{p-1}{p}}.
$$
It is easy to check that
$$
b=\frac{nq}{n^2-nq+q}\text{ implies that }(n-1)b\frac{p}{p-1}=q^*=\frac{nq}{n-q}
$$
so that the last integral is finite and that $\frac{n^2}{2n-1}<q$ implies $b>1$. 
\end{proof}

\subsection{Quasicontinuous representative} Let us recall that each Sobolev mapping has a nice quasicontinuous representative (see \cite[Chapters 4.7 and 4.8]{EG}), i.e. a representative which has a Lebesgue points outside of a set of capacity zero. 

\prt{Theorem}
\begin{proclaim}\label{qc}
Let $\Omega\subset\rn$ be open, $1\leq b\leq n$ and let $g\in W^{1,b}(\Omega)$. Then there is a representative of $g$ and $E\subset \Omega$ with $\operatorname{Cap}_b(E)=0$ such that
$$
\text{ for every }x\in\Omega\setminus E\text{ we have }\lim_{r\to 0}\frac{1}{|B(x,r)|}\int_{B(x,r)}|g(y)-g(x)|\; dy=0.
$$
Moreover, the Hausdorff dimension of $E$ is equal to $n-b$. 
\end{proclaim}

\subsection{Poincar\'e inequality for functions that vanish on a set of positive Bessel capacity} 

We need the following version of Poincar\'e inequality for functions that vanish on a set of positive Bessel $b$-capacity (or Hausdorff dimension bigger than $n-b$) from \cite[Chapter 4.5. and Theorem 2.6.16]{Z}. 

\prt{Theorem}
\begin{proclaim}\label{Poincare}
Let $\Omega\subset\rn$ be open, $1\leq b<\infty$ and let $g\in W^{1,b}(\Omega)$. Then
\eqn{Poinc}
$$
\oint_B |g|\, dx\leq C r\oint_B|Dg|\, dx\leq Cr\bigg(\oint_B|Dg|^b\, dx\bigg)^{\frac{1}{b}}
$$
for each ball $B\subset\Omega$ of radius $r$ such that 
the Hausdorff dimension of $\{x\in B:\ g(x)=0\}$ is bigger than $n-b$ for $b\leq n$ and such that $\{x\in B:\ g(x)=0\}$ is nonempty for $b>n$. 
\end{proclaim}

\subsection{Mapping of finite distortion and injectivity} 

Let $\Omega\subset\rn$ be a domain. We say that a mapping $f\in W^{1,1}_{\loc}(\Omega,\rn)$ is a mapping of finite distortion if $J_f\in L^1_{\loc}(\Omega)$, $J_f\geq 0$ a.e. and 
$|Df(x)|$ vanishes a.e. in the set $\{x\in\Omega:\ J_f(x)=0\}$. For a mapping of finite distortion we define its distortion function as 
$$
K_f(x):=\begin{cases}
\frac{|Df(x)|^n}{J_f(x)}&\text{ if }J_f(x)\neq 0,\\
1&\text{ if }J_f(x)=0.\\
\end{cases}
$$
It is clear that each mapping with $J_f\in L^1_{\loc}(\Omega)$ and $J_f>0$ a.e.\ is a mapping of finite distortion. We need the following result about injectivity of mappings of finite distortion.

\prt{Theorem}
\begin{proclaim}\label{opendiscr}
Let $\Omega\subset\rn$ be a Lipschitz domain and let $f_0:\overline{\Omega}\to\rn$ be a homeomorphism. Assume that $f\in W^{1,1}_{\loc}(\Omega,\rn)$ is a mapping of finite distortion such that $f\in C(\overline{\Omega},\rn)$, $f=f_0$ on $\partial\Omega$ in the sense of traces, $K_f\in L^1(\Omega)$ for $n=2$ and $K_f\in L^p(\Omega)$ for some $p>n-1$ for $n\geq 3$. 
Then $f$ is a homeomorphism on $\Omega$. 
\end{proclaim}
\begin{proof}
This essentially follows from \cite[Theorem 6.8]{Kr}. The only thing we need to verify is that $\deg(f,\Omega,z)\leq 1$ for every $z\in\rn\setminus f(\partial\Omega)$ (i.e. that $f\in DEG1$ class from \cite{Kr}). However $f$ is continuous up to the boundary and is equal to a homeomorphism $f_0$ on $\partial\Omega$ and thus $\deg(f,\Omega,z)=\deg(f_0,\Omega,z)$. Now each homeomorphism has degree either $1$ or $-1$ on $f_0(\Omega)$ so we can assume without loss of generality that it is $1$. 
\end{proof}

\section{Proof of positive results}

\prt{Theorem}
\begin{proclaim}\label{qqq}
Let   $\Omega\subset\er^n$ be an open set, $0<d<n$, $1\leq b<\infty$, $a>0$ and $(1-\frac{n-d}{b})a\geq n-d$. 
Assume that $g\in W^{1,b}(\Omega)$ is a mapping with $|g|^{-a}\in L^1(\Omega)$. Then $\haus^d(\{g=0\})=0$. 
\end{proclaim}

\begin{proof}
Assume by contradiction that $\haus^d(\{g=0\})=c_0>0$. Given $\epsilon>0$
 we can find disjoint balls $B_i=B(c_i,r_i)\subset\Omega$ with $\haus^d(\{g=0\}\cap B_i)>0$ such that $\sup_i r_i<\tilde{\epsilon}$ and $C c_0\leq \sum_{i=1}^\infty r^d_i $. Further by the fact that $|g|^{-a}\in L^1$, we find that $\haus^n(\{g=0\})=0$. This implies that the union of these balls can be assumed to cover a small enough measure such that by the absolute continuity of the integral 
\eqn{abscont}
$$
\int_{\bigcup_i B_i}|g|^{-a}\; dx<\epsilon.
$$ 

Note that $(1-\frac{n-d}{b})a\geq n-d$ implies that $d>n-b$. Therefore $\haus^d(\{g=0\}\cap B_i)>0$ implies that we can use \eqref{Poinc} to obtain
\eqn{poinc}
$$
\oint_{B_i} |g|\, dx\leq Cr_i\bigg(\oint_{B_i}|Dg|^b\, dx\bigg)^{\frac{1}{b}}. 
$$
%
H\"older's inequality gives us 
\begin{align*}
Cr_i^n&\leq \int_{B_i} |g|^{\frac{a}{a+1}}\frac{1}{|g|^{\frac{a}{a+1}}}\leq C\Bigl(\int_{B_i} |g|\Bigr)^{\frac{a}{a+1}}\Bigl(\int_{B_i} \frac{1}{|g|^a}\Bigr)^{\frac{1}{a+1}}.
\end{align*}
After we raise this to power $a+1$ and use \eqref{poinc} we get 
\begin{align*}
Cr_i^{n+an}
\leq C\Bigl( r_i^{1+n-\frac{n}{b}}\Bigl(\int_{B_i}|Dg|^b\Bigr)^{\frac{1}{b}}\Bigr)^{a}\int_{B_i} \frac{1}{|g|^a}\, dx,
\end{align*}
which leads to the following key estimate
\eqn{firsttry}
$$
\frac{r_i^{n-a(1-\frac{n}{b})}}{\Bigl(\int_{B_i}|Dg|^b\Bigr)^{\frac{a}{b}}}
\leq C \int_{B_i} \frac{1}{|g|^a} 
$$
and therefore 
$$
\epsilon>\sum_i \int_{B_i}\frac{1}{|g|^a}\geq \sum_i \frac{r_i^{n-a(1-\frac{n}{b})}}{\Bigl(\int_{B_i}|Dg|^b\Bigr)^{\frac{a}{b}}}. 
$$
Now we use H\"older's inequality 
$$
\Bigl(\sum_{i=1}^\infty a_i^{\alpha}\Bigr)^{\frac{1}{\alpha}} \geq \frac{1}{\Bigl(\sum_{i=1}^\infty b_i^{\beta} \Bigr)^{\frac{1}{\beta}}} \sum_{i=1}^\infty a_i b_i
$$
where we choose $\frac{\beta}{\alpha}=\frac{b}{a}$ which means
for $\alpha = \frac{a+b}{b}$, $\beta=\frac{a+b}{a}$,
\[b_i = \bigg(\int_{B_i} |Dg(x)|^b dx \bigg)^{\frac{a}{b\alpha}}\text{ and }a_i = \bigg(\frac{r_i^{n-a(1-\frac{n}{b})}}{\big(\int_{B_i}|Dg|^b\, dx\big)^{\frac{a}{b}}}\bigg)^{\frac{1}{\alpha}}
\]
 and we infer
\begin{align*}
\epsilon^\frac{1}{\alpha} > C \frac{\sum_{i=1}^\infty r_i^\frac{n-a(1-\frac{n}{b})}{\alpha}}{\big(\sum_{i=1}^\infty \int_{B_i}|Dg|^b\, dx\big)^\frac{a}{b}}.
\end{align*}
Since $\int_{\Omega}|Dg|^b\leq C$ this implies that
\[
C\epsilon^\frac1{\alpha}>\sum_{i=1}^\infty r_i^\frac{b(n-a(1-\frac{n}{b}))}{a+b}
\] 
and our condition $(1-\frac{n-d}{b})a\geq n-d$ implies that 
\[
d\geq \frac{b(n-a(1-\frac{n}{b}))}{a+b}
\]
giving us
$$
C\epsilon^\frac1{\alpha}>\sum_{i=1}^\infty r_i^d\geq C c_0
$$
leading to a desired contradiction. 
\end{proof}

\begin{proof}[Proof of positive results in Theorem \ref{technical}]
Let us first assume that $q>n$. As $f\in W^{2,q}(\Omega,\er^n)$, $q>n$, we can use Sobolev embedding and obtain $Df\in C^{0,1-\frac{n}{q}}$. It follows easily that 
$|Df|\in L^{\infty}(\Omega)$ and it is not difficult to conclude that $J_f\in W^{1,q}(\Omega)$. Our assumption $(1-\frac{n-d}{q})a\geq n-d$ implies that we can use 
Theorem \ref{qqq} for $b=q$ and $g=J_f$ and we obtain our conclusion. 

Let us now assume that $\frac{n^2}{2n-1}<q\leq n$. From Lemma \ref{Sebastian} we obtain that $J_f\in W^{1,b}$ for $b=\frac{nq}{n^2-nq+q}$. It is easy to see that 
$$
\Bigl(1-\frac{n-d}{b}\Bigr)a=\Bigl(n-d+\frac{d}{n}-\frac{n}{q}(n-d)\Bigr) a\geq n-d
$$
where we have used our assumption in the last inequality. The conclusion now follows from Theorem \ref{qqq}. 
\end{proof}

\begin{proof}[Proof of Corollary \ref{cor}]
Let us denote by $P_i$ the projection to the hyperplane $\{x_i=0\}$, $i\in\{1,\hdots,n\}$. 
Using Theorem \ref{technical} for $d=n-1$ we obtain that 
$\mathcal{H}^{n-1}(\{J_f=0\})=0$ and hence also $\mathcal{H}^{n-1}(P_i(\{J_f=0\}))=0$. Here we use a continuous representative of $J_f$ for $q>n$ 
and quasicontinuous representative from Theorem \ref{qc} for $q\leq n$ which is correctly defined at Lebesgue points, i.e. up to a set of dimension $n-b=n-\frac{nq}{n^2-nq+q}<d=n-1$. 
It is well-known that this quasicontinuous representative satisfies the ACL condition (see e.g. \cite[Chapter 4.9]{EG}) so especially  it is continuous on lines
$\Omega\cap P_i^{-1}(\{a\})$ for $\haus^{n-1}$-a.e. $a\in\er^{n-1}$. 


Assume for contrary both $\{J_f>0\}$ and $\{J_f<0\}$ have positive measure. Then we can find a direction $i\in\{1,\hdots,n\}$ such that 
there is $A\subset\er^{n-1}$ with $\mathcal{H}^{n-1}(A)>0$ such that for every $a\in A$ we know that $J_f$ is continuous on $\Omega\cap P_i^{-1}(\{a\})$ and 
$\Omega\cap P_i^{-1}(\{a\})$ contains a segment $L$ with 
$$
L\cap\{J_f>0\}\neq\emptyset\text{ and }L\cap\{J_f<0\}\neq\emptyset. 
$$
It follows that there is $x_a\in L\subset \Omega\cap P_i^{-1}(\{a\})$ with $J_f(x_a)=0$ which gives us a contradiction. 
\end{proof}

\begin{proof}[Proof of positive results in Theorem \ref{main}]
We claim that $f$ satisfies the assumptions of Corollary \ref{cor}. Indeed, for $n=2$ we use the same assumptions for $q$ and $a$ and for $n\geq 3$ we have 
$$
\Bigl(1-\frac{1}{q}\Bigr)a\geq \Bigl(1-\frac{n}{q}+\frac{1}{n-1}\Bigr)a>1\text{ for }n-1<q\leq n
$$
$$
\text{ and }a>n-1\text{ implies that }\Bigl(1-\frac{1}{q}\Bigr)a\geq 1\text{ for }q>n. 
$$
Hence we can assume without loss of generality that $J_f>0$ a.e. 
Since $f\in W^{2,q}$, $q>n-1$, implies that $Df\in W^{1,n}$ and thus $J_f\in L^1$ we obtain that $f$ is a mapping of finite distortion.

To get our conclusion we apply Theorem \ref{opendiscr}. We know that $f\in W^{2,q}$ for $q>n-1$ and hence $f$ is continuous and since $\Omega$ is Lipschitz it is standard that even $f\in C(\overline{\Omega},\rn)$. It remains to show that $K_f\in L^1$ for $n=2$ and that $K_f\in L^p$ for some $p>n-1$ for $n\geq 3$. 

Let us first consider $n=2$ and $K_f=\frac{|Df|^2}{J_f}\in L^1$. 
For $q>2$ we obtain that $|Df|\in L^{\infty}$ and the condition $a\geq 1$ implies that $K_f\in L^1$. 
In the case $\frac{4}{3}<q<2$ we use H\"older's inequality 
$$
\int_{\Omega} \frac{|Df|^2}{J_f}\leq \Bigl(\int_{\Omega} \frac{1}{J_f^a}\Bigr)^{\frac{1}{a}}\Bigl(\int_{\Omega} |Df|^{2\frac{a}{a-1}}\Bigr)^{\frac{a-1}{a}}.
$$
Clearly $(2-\frac{2}{q})a\geq (\frac{3}{2}-\frac{2}{q})a\geq 1$ and thus $(2-\frac{2}{q})a\geq 1$ implies that
$$
2\frac{a}{a-1}\leq q^*=\frac{2q}{2-q}
$$
and the last integral is finite by $Df\in L^{q*}$. Similarly for $q=2$ we use H\"older's inequality, $a\geq 2$ and $Df\in L^4$. 

It remains to consider $n\geq 3$ and $K_f=\frac{|Df|^n}{J_f}\in L^p$ for some $p>n-1$. For $q>n$ we obtain that $|Df|\in L^{\infty}$ and the condition $a>n-1$ gives us our conclusion. 
For $n-1<q<n$ we obtain 
$$
\int\frac{|Df|^{np}}{J_f^p}\leq \Bigl(\int \frac{1}{J_f^a}\Bigr)^{\frac{p}{a}}\Bigl(\int |Df|^{np\frac{a}{a-p}}\Bigr)^{1-\frac{p}{a}}. 
$$
Our assumption 
$$
\frac{n(q-n+1)}{q(n-1)}a=\Bigl(1-\frac{n}{q}+\frac{1}{n-1}\Bigr)a>1
$$
implies that for $p$ sufficiently close to $n-1$ we obtain 
$$
np\frac{a}{a-p}< q^*=\frac{nq}{n-q}
$$
and the last integral is finite by $Df\in L^{q^*}$. The case $q=n$ is again similar as 
$|Df|$ is integrable with any power and $a>n-1$. 
\end{proof}

\section{Counterexample for the size of the Hausdorff measure of $\{J_f=0\}$}\label{ahoj}

The following counterexample is a generalization of the Cantor type construction of Ponomarev \cite{P} (see also \cite[Section 4.3]{HK14}) for mappings with higher order derivatives, which was provided by Roskovec \cite{R18}. In fact we need a generalization of this construction since we map squares to rectangles instead of squares to squares as in \cite{HKM}. 

\begin{proof}[Proof of counterexample in Theorem \ref{technical}]
Given that $(1-\frac{n-d}{q})a<n-d$, we would like to construct a homeomorphism $f \in W^{2,q}((-1,1)^n, (-1,1)^n)$ with $J^{-a}_f \in L^1((-1,1)^n)$, such that $\mathcal{H}^d(\{J_f=0\})>0$. 
First we construct a Cantor set $\C_Q$ with $\mathcal{H}^d(\C_Q)>0$ and a Cantor type set $\C_R$ and then we construct $f$ which maps $\C_Q$ onto $\C_R$ and satisfies $J_f=0$ on $\C_Q$.

{\underline{Step 1: Construction of Cantor sets $\C_Q$ onto $\C_R$:}}	To start, let us determine an important parameter $\beta$. The condition $(1-\frac{n-d}{q})a<n-d$ implies $\frac{n}{d}\frac{q-n+d}{q}<\frac{n}{d}\frac{n-d}{a}$. We call $\beta$ the midpoint between the two, i.e.
	\begin{equation}\label{FixBeta}
		\frac{n}{d}\frac{q-n+d}{q}<\beta = \frac{n}{d}\frac{q-n+d}{2q} + \frac{n}{d}\frac{n-d}{2a}<\frac{n}{d}\frac{n-d}{a}.
	\end{equation}
	
	For $x = [x^1, \dots, x^n] \in \mathbb{R}^n$ denote
	$$
	\bar{x} = [x^1, \dots, x^{n-1}] \in \mathbb{R}^{n-1}.
	$$
	
	The first Cantor type set is the same, as in \cite{HK14}. Denote by $\mathbb{V}$ the set of vertices of the cube $[-1,1]^n$. Now $\mathbb{V}^i = \mathbb{V} \times \ldots \times \mathbb{V}$ serves as an index set for our construction. Let us set $z_0=\tilde z_0 = 0$ and denote
\eqn{defab}
	$$
		a_i = 2^{-\frac{n}{d}i}, \quad b_i = 2^{-(\frac{n}{d}+\beta)i},
	$$
	and we define cubes and rectangles 
	\begin{align*}
	& Q(x,r) = \{y \in \mathbb{R}^n : \|x-y\|_\infty \leq r\}, \\
	& R(x,l, w) = \{y \in \mathbb{R}^n:  \|\bar{x}-\bar{y}\|_\infty \leq l,  |x^n-y^n|\leq w\}.
	\end{align*}
	
	Then $[-1,1]^n = Q(z_0, a_0) = R(z_0, a_0, b_0)$. We proceed by induction. For $v = [v_1, \ldots, v_i] \in \mathbb{V}^i$ we denote $w = [v_1,\ldots,v_{i-1}]$ and define points
	\begin{align*}
	& z_v = z_w +\frac{1}{2}(a_{i-1})v_i = z_0 + \sum\limits_{j=1}\limits^i\frac{1}{2}(a_{j-1})v_j, \\
	\tilde{z}_v = [\tilde{z}^1_v, \dots, \tilde{z}^{n-1}_v, \tilde{z}^n_v] = &\Bigl[\tilde{z}^1_w +\frac{1}{2}(a_{i-1})v^1_i, \dots, \tilde{z}^{n-1}_w +\frac{1}{2}(a_{i-1})v^{n-1}_i , \tilde{z}^n_w +\frac{1}{2}(b_{i-1})v^n_i\Bigr],
	\end{align*}
	and we define squares and rectangles centered at these points (see Fig. \ref{Fig:Rectangles})
	\eqn{rectangle}
	$$
	\begin{aligned}
	& Q'_v = Q(z_v, \frac{1}{2}a_{i-1}),  \quad Q_v =Q(z_v, a_i), \\
	& R'_v = R(\tilde{z}_v, \frac{1}{2}a_{i-1}, \frac{1}{2}b_{i-1}), \quad R_v = R(\tilde{z}_v, a_i, b_i). 
	\end{aligned}
	$$
	
	\begin{figure}\unitlength=0.9mm
		\begin{picture}(110,50)(5,5)
		\put(10,10){\framebox(40,40){}}
		\put(30,10){\line(0,1){40}}
		\put(10,30){\line(1,0){40}}
		\put(12,16){\framebox(16,8){}}
		\put(32,16){\framebox(16,8){}}
		\put(12,36){\framebox(16,8){}}
		\put(32,36){\framebox(16,8){}}
		\put(60,10){\framebox(40,40){}}
		\put(80,10){\line(0,1){40}}
		\put(60,30){\line(1,0){40}}
		\put(62,16){\framebox(16,8){}}
		\put(70,16){\line(0,1){8}}
		\put(62,20){\line(1,0){16}}
		\put(63,17){\framebox(6,2){}}
		\put(63,21){\framebox(6,2){}}
		\put(71,17){\framebox(6,2){}}
		\put(71,21){\framebox(6,2){}}
		\put(62,36){\framebox(16,8){}}
		\put(70,36){\line(0,1){8}}
		\put(62,40){\line(1,0){16}}
		\put(63,37){\framebox(6,2){}}
		\put(63,41){\framebox(6,2){}}
		\put(71,37){\framebox(6,2){}}
		\put(71,41){\framebox(6,2){}}
		\put(82,16){\framebox(16,8){}}
		\put(90,16){\line(0,1){8}}
		\put(82,20){\line(1,0){16}}
		\put(83,17){\framebox(6,2){}}
		\put(83,21){\framebox(6,2){}}
		\put(91,17){\framebox(6,2){}}
		\put(91,21){\framebox(6,2){}}
		\put(82,36){\framebox(16,8){}}
		\put(90,36){\line(0,1){8}}
		\put(82,40){\line(1,0){16}}
		\put(83,37){\framebox(6,2){}}
		\put(83,41){\framebox(6,2){}}
		\put(91,37){\framebox(6,2){}}
		\put(91,41){\framebox(6,2){}}
		\end{picture}
		\caption{An illustration of the first and second generation rectangles $R_{v}$ and $R'_{v}$ of the construction of the Cantor set for $n=2$.}\label{Fig:Rectangles}
	\end{figure}
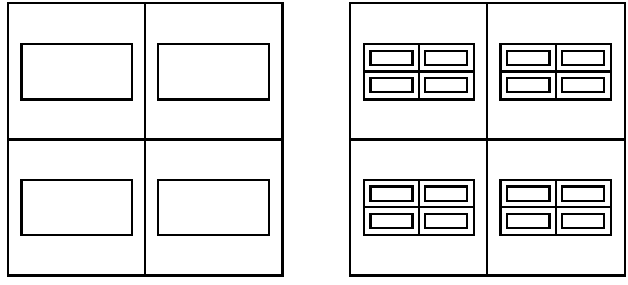

	The resulting Cantors sets 
	$$
	\C_Q=\bigcap_{k\in\en} \bigcup_{v\in \V^k} Q_{v} \text{ and }\C_R=\bigcap_{k\in\en} \bigcup_{v\in \V^k} R_{v}
	$$  
	are the products of $n$ Cantor sets in $\mathbb{R}$, with $\mathcal{L}_n(\C_Q) = \mathcal{L}_n(\C_R) = 0$. It is not difficult to find out that $\mathcal{H}^d(\C_Q)>0$ since we have 
	$2^{ni}$ cubes of sidelength $a_i=2\cdot 2^{-\frac{n}{d}i}$ in the $i$-th step of the construction. 
	Notation $\C_Q$ and $\C_R$ corresponds to the fact that $\C_Q$ is an intersection of cubes and $\C_R$ is an intersection of rectangles. 
	
	Our mapping $f$ is defined as the limit of the sequence of smooth homeomorphisms $f_i: (-1,1)^n \to (-1,1)^n$, where each $f_i$ maps the $i$-th generation cubes of the first Cantor set $\C_Q$ onto the $i$-th generation rectangles of the second set $\C_R$. In the last coordinate it is squeezing cube of size $a_i$ to rectangle of size $b_i<<a_i$ and hence in the 
	limit $J_f$ vanishes on $\C_Q$. 

{\underline{Step 2: Auxiliary function $h_i$:}}	
 Following the ideas of Roskovec \cite{R18}, we consider the convolution kernel $\phi: (-1,1) \to \mathbb{R}$ such that
	\begin{enumerate}
		\item $\phi(t) \geq 0$,
		\item $\phi(-t) = \phi(t)$,
		\item $\int^1_{-1}\phi(t)dt = 1$,
		\item $|D^2 \phi(t)| \leq C$,
		\item $\int^1_{-1}|D^2 \phi(t)| dt \leq C$,
		\item $\seb{\phi \in C^\infty_0}((-1,1))$.
	\end{enumerate}
	Then, for $r>0$, the function $\phi_r(t) = r^{-1}\phi(r^{-1}t)$ satisfies
	\begin{enumerate}
		\item $\phi_r(t) \geq 0$,
		\item $\phi(-t) = \phi(t)$,
		\item $\int^r_{-r}\phi_r(t)dt = 1$,
		\item $|D^2 \phi_r(t)| \leq r^{-3}C$,
		\item $\int^r_{-r}|D^2 \phi_r(t)| dt \leq r^{-2}C$,
		\item $\seb{\phi_r} \in C^\infty_0((-r,r))$.
	\end{enumerate}
	
	The next step is to define smooth mappings $g_i : Q(0, \frac{a_{i-1}}{2}) \to R(0, \frac{a_{i-1}}{2}, \frac{b_{i-1}}{2})$, from which we construct the mappings $f_i$. 
	Our construction guarantees that the mapping is identical in the first $n-1$ coordinates and is smooth in the last one. 
	To define the $n$-th component of $g_i$, for $i \in \mathbb{N}$, we start by defining a preliminary function $h^\ast_i(t): [0, \tfrac{1}{2}a_{i-1}] \to \mathbb{R}$ as follows:
	$$
	h^\ast_i(t)=
	\begin{cases}
	\frac{b_i}{a_i} t,& \quad t \in [0, a_i + \tfrac{1}{4}(\tfrac{1}{2}a_{i-1} - a_i)],\\
	l_i(t),& \quad t \in [a_i + \tfrac{1}{4}(\tfrac{1}{2}a_{i-1} - a_i), \tfrac{1}{2}a_{i-1} - \tfrac{1}{4}(\tfrac{1}{2}a_{i-1} - a_i)],\\
	\frac{b_{i-1}}{a_{i-1}} t,& \quad t \in [\tfrac{1}{2}a_{i-1} - \tfrac{1}{4}(\tfrac{1}{2}a_{i-1} - a_i), \tfrac{1}{2}a_{i-1}],
	\end{cases}
	$$
	where $l_i$ is the uniquely determined linear function so that $h^*_i$ is continuous on $[0,\tfrac{1}{2}a_{i-1}]$ (see Figure~\ref{Fig:Graph}). Notice that 
	$$
	0<\frac{b_i}{a_i} = 2^{-\beta}\frac{b_{i-1}}{a_{i-1}} < \frac{b_{i-1}}{a_{i-1}}\text{ and }
	a_i + \tfrac{1}{4}(\tfrac{1}{2}a_{i-1} - a_i)< \tfrac{1}{2}a_{i-1} - \tfrac{1}{4}(\tfrac{1}{2}a_{i-1} - a_i)
	$$ 
	and therefore
	$$
	\frac{b_i}{a_i}[a_i + \tfrac{1}{4}(\tfrac{1}{2}a_{i-1} - a_i)] < \frac{b_{i-1}}{a_{i-1}}[a_i + \tfrac{1}{4}(\tfrac{1}{2}a_{i-1} - a_i)] < \frac{b_{i-1}}{a_{i-1}}[\tfrac{1}{2}a_{i-1} - \tfrac{1}{4}(\tfrac{1}{2}a_{i-1} - a_i)].
	$$
	This means that $l_i$ is increasing and therefore $h^\ast_i$ is an increasing continuous piecewise linear function on $[0,\tfrac{1}{2}a_{i-1}]$.
	
	We define $h_i$ by smoothing $h^\ast_i(t)$ using $\phi_{r_i}(t)$ for $r_i =\tfrac{1}{16}(\tfrac{1}{2}a_{i-1} - a_i) $, specifically we have
	$$
	h_i(t) =\begin{cases}
	\frac{b_i}{a_i} t,& \quad t \in [0, a_i],\\
	\int_{-r_i}^{r_i} h^\ast_i(t+s) \phi_{r_i}(s)ds,& \quad t \in [a_i, \tfrac{1}{2}a_{i-1} - \tfrac{1}{8}(\tfrac{1}{2}a_{i-1} - a_i)],\\
	\frac{b_{i-1}}{a_{i-1}} t,& \quad t \in [\tfrac{1}{2}a_{i-1} - \tfrac{1}{8}(\tfrac{1}{2}a_{i-1} - a_i), \tfrac{1}{2}a_{i-1}].
	\end{cases} 
	$$
	We have that $h^*_i$ is linear on $[0,a_i + \tfrac{1}{4}(\tfrac{1}{2}a_{i-1} - a_i)]$. This fact and property (2) of $\phi_r$ easily yield that $h_i(t) = h_i^*(t)$ for $t \in [0,a_i + \tfrac{3}{16}(\tfrac{1}{2}a_{i-1} - a_i)]$. Similarly $h_i(t) = h_i^*(t)$ for $t \in [\tfrac{1}{2}a_{i-1} - \tfrac{3}{16}(\tfrac{1}{2}a_{i-1} - a_i), \tfrac{1}{2}a_{i-1}]$. Easily we get that $h_i$ is an increasing function.
	
	\begin{figure}
		\definecolor{ffqqqq}{rgb}{1.,0.,0.}
		\begin{tikzpicture}[line cap=round,line join=round,>=triangle 45,x=0.8cm,y=0.8cm]
		\clip(0.,-1.5) rectangle (13.,6.5);
		\fill[line width=2.pt,color=ffqqqq,fill=ffqqqq,fill opacity=0.10000000149011612] (8.75,9.) -- (8.75,-2.) -- (9.25,-2.) -- (9.25,9.) -- cycle;
		\fill[line width=2.pt,color=ffqqqq,fill=ffqqqq,fill opacity=0.10000000149011612] (10.75,9.) -- (10.75,-2.) -- (11.25,-2.) -- (11.25,9.) -- cycle;
		\draw [line width=0.8pt,domain=0.:13.] plot(\x,{(-0.-0.*\x)/16.});
		\draw [line width=0.8pt,dotted] (12.,-0.1) -- (12.,8.5);
		\draw [line width=0.8pt,dotted] (8.,-0.1) -- (8.,8.5);
		\draw [line width=0.8pt,dotted] (0.6,0.5) -- (13.,0.5);
		\draw [line width=0.8pt,dotted] (0.8,6) -- (13.,6);
		\draw [line width=0.8pt,dotted] (9.,-0.8) -- (9.,8.5);
		\draw [line width=0.8pt,dotted] (11.,-0.8) -- (11.,8.5);
		\draw [line width=0.8pt] (9.,0.5625)-- (11.,5.5);
		\draw [line width=0.8pt] (12.,6.)-- (11.,5.5);
		\draw [line width=0.8pt] (9.,0.5625)-- (0.,0.);
		\draw [line width=0.8pt,dotted] (11.,5.5)-- (0.,0.);
		\draw [line width=0.8pt,dotted] (9.,0.5625)-- (12.,0.75);
		\draw [line width=0.8pt,color=ffqqqq] (8.75,9.)-- (8.75,-0.5);
		\draw [line width=0.8pt,color=ffqqqq] (9.25,-0.5)-- (9.25,9.);
		\draw [line width=0.8pt,color=ffqqqq] (10.75,9.)-- (10.75,-0.5);
		\draw [line width=0.8pt,color=ffqqqq] (11.25,-0.5)-- (11.25,9.);
		\draw (0,0) node[anchor=north west] {$0$};
		\draw (7.7,0) node[anchor=north west] {$a_i$};
		\draw (0,0.8) node[anchor=north west] {$b_i$};
		\draw (0,6.3) node[anchor=north west] {$\tfrac{1}{2}b_{i-1}$};
		\draw (11.5,0) node[anchor=north west] {$\tfrac{1}{2}a_{i-1}$};
		\draw (7.4,-0.7) node[anchor=north west] {$\tfrac{1}{8}a_{i-1}+\tfrac{3}{4}a_{i} $};
		\draw (10.3,-0.7) node[anchor=north west] {$\tfrac{3}{8}a_{i-1} + \tfrac{1}{4}a_i$};
		\end{tikzpicture}
		\caption{A sketch of the graph of $h^*_i$. The interval $[a_i, \tfrac{1}{2}a_{i-1}]$ is separated into 3 parts with the central part twice as long as each of the ones on the sides. The smooth function, $h_i$, is equal to $h^*_i$ outside the two strips coloured in red. The width of the red strips is $\tfrac{1}{4}$ the length of the narrower parts.}\label{Fig:Graph}
	\end{figure}
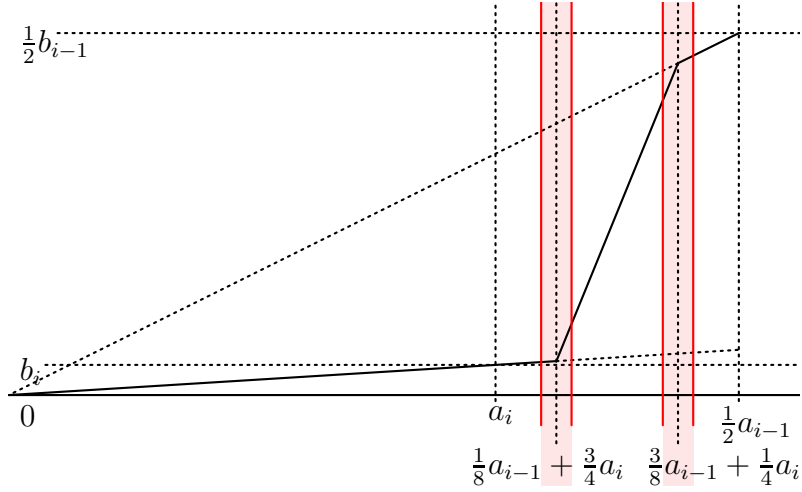
	
	We estimate (see Figure~\ref{Fig:Graph} and \eqref{defab}) that
	$$
		h_i'(t) \leq \max_{t\in [0,\tfrac{1}{2}a_{i-1}]}(h^*_{i})'(t) 
		\leq C(n,d,\beta)\frac{b_i}{a_i}.
	$$
	In fact we see from the definition of $h_i^*$ that
	\begin{equation}\label{hdif1}
		h_i'(t) \approx 
		\seb{\frac{b_i}{a_i}}.
	\end{equation}
	Further it is easy to estimate that $h_i'' \leq r_i^{-1}\|h'_i\|_{\infty, [0, \tfrac{1}{2}a_{i-1}]}$. Since
	$$
		r_i = \tfrac{1}{16}(\tfrac{1}{2}a_{i-1} - a_i) = 2^{-i\tfrac{n}{d} - 4}(2^{\tfrac{n}{d} -1} - 1) = C(n,d)a_i
	$$
	we have
	\begin{equation}\label{hdif2}
	|h''_i(t)| \leq C(n,d,\beta)\frac{b_i}{a_i^2}.
	\end{equation}

	Now we define a preliminary map $g^*_i : Q(0, \frac{a_{i-1}}{2}) \to R(0, \frac{a_{i-1}}{2}, \frac{b_{i-1}}{2})$ as
	$$
	g^*_i(x) = \bigl[\bar{x}, \sgn(x^n)h_i(|x^n|)\bigr].
	$$
	The shifted version of $\tilde{z}_v + g^*_i(\cdot - z_v)$ maps $Q'_v$ onto $R'_v$ for any $v \in \mathbb V^i$. This map is continuous, smooth and strictly monotone in every direction, and therefore it is a homeomorphism.

{\underline{Step 3: Modification of $h_i$ to $\overline{h}_i$:}}		
	Our final modification of $g^*_i$ before we construct $f: [-1,1]^n \to [-1,1]^n$, is intended to guarantee that the map smoothly coincides with the linear map that sends $Q'_v$ onto $R'_v$ on the boundary of $Q'_v$. By the construction of $h_i$, this is already true for the faces of $Q'_v$ where $x^n$ is extremal but now we are concerned with the faces where one of  $x^1, \dots, x^{n-1}$ is extremal. To do this, we consider the following smooth function $ \lambda_{a, b} : \er \to [0,1]$ for $0<a<b$ such that
	\begin{enumerate}
		\item $\lambda_{a, b}(t) \in C^\infty(\er)$,
		\item $\lambda_{a, b}(t) \equiv 1$ for $t \in (-\infty,a)$,
		\item $\lambda_{a, b}(t) \equiv 0$ for $t \in (b,\infty)$,
		\item $0<\lambda_{a,b}'(t) \leq C(b-a)^{-1}$ and $|\lambda_{a, b}''(t)| \leq C (b-a)^{-2}$ for $t \in \er$.
	\end{enumerate}
	
	We denote 
	$$
	\lambda_i(t) = \lambda_{\tfrac{3}{8}a_{i-1}-\tfrac{1}{4}a_i, \tfrac{7}{16} a_{i-1}-\tfrac{1}{8}{a_i}} (t)
	$$
	and we have (see \eqref{defab})
	\begin{equation}\label{Intemediary}
	|\lambda_i'(t)|\leq C(n,d)a_i^{-1}    \ \text{and} \ |\lambda_i''(t)|\leq C(n,d)a_i^{-2}.
	\end{equation}
	We modify the map $g^*_i$ as follows
	\eqn{defgi}
	$$
		g_i(x) =\bigl[\bar{x}, \bar h_i(\bar{x},x^n)\bigr],
	$$
	where $\bar h_i$ acts from $Q(0, \frac{a_{i-1}}{2}) \subset \mathbb{R}^n$ to $(-\frac{b_{i-1}}{2}, \frac{b_{i-1}}{2}) \subset \mathbb{R}$ as 
	\eqn{defhi}
	$$
	\bar h_i(\bar{x},x^n) := \sgn(x^n)h_i(|x^n|)\prod\limits_{j=1}^{n-1}\lambda_i(|x^j|) + \bigl(1-\prod\limits_{j=1}^{n-1}\lambda_i(|x^j|)\bigr)\frac{b_{i-1}}{a_{i-1}}x^n. 
	$$
	Since $\bar h_i(\bar{x}, t)$ is the convex combination of a pair of increasing functions in $t$ it is clear that $\bar h_i(\bar{x}, t)$ is an increasing function of $t$ for every fixed $\bar{x}$. Further $\bar {h}_i \in \C^{\infty}$. Let us assume then that $g_i(x) = g_i(y)$. Then necessarily $\bar{x} = \bar{y}$ and the fact that $\bar{h}_i$ is strictly increasing implies that also $x^n = y^n$. Thus $g_i$ is injective. Further $g_i$ is $\C^{\infty}$ smooth and $g_i\big(Q(0, \frac{a_{i-1}}{2})\big) = R(0, \frac{a_{i-1}}{2}, \frac{b_{i-1}}{2})$. Due to the definition of $\lambda_i(|x^j|)$, it follows, that
	$$
	g_i(x) = \Bigl[\bar{x}, \frac{b_{i-1}}{a_{i-1}}x^n\Bigr]\quad x\in Q(0, \frac{a_{i-1}}{2}) \setminus Q(0, \tfrac{7}{16} a_{i-1}-\tfrac{1}{8}{a_i}).
	$$
	which is the boundary behaviour we wanted.
	
	From \eqref{Intemediary} we readily get the estimates
	\begin{equation}\label{Intemediary2}
	\Big|D \Big(\prod\limits_{j=1}^{n-1}\lambda_i(|x^j|)\Big)\Big|\leq Ca^{-1}_i \text{ and }
	\Big|D^2 \Big(\prod\limits_{j=1}^{n-1}\lambda_i(|x^j|)\Big)\Big|\leq Ca^{-2}_i.
	\end{equation}
	Then, we can estimate $|D^2(g_i(x))|$ as
	\eqn{nove}
	$$
	\begin{aligned}
			|D^2(g_i(x))|\leq 
			 C \max\Bigl\{&\big|D^2h_i(|x^n|)\big|,\; \Big|D \Big(\prod\limits_{j=1}^{n-1}\lambda_i(|x^j|)\Big) D h_i(|x^n|)\Big|, \Big|h_i(|x_n|) D^2\Big(\prod\limits_{j=1}^{n-1}\lambda_i(|x^j|)\Big)\Big|,\\ 
			&\Big|\frac{b_{i-1}}{a_{i-1}}x^n D^2\Big(\prod\limits_{j=1}^{n-1}\lambda_i(|x^j|)\Big)\Big|, \;\Big|\frac{b_{i-1}}{a_{i-1}}D\Big(\prod\limits_{j=1}^{n-1}\lambda_i(|x^j|)\Big)\Big| \Bigr\}. 
		\end{aligned}
	$$
	With the help of \eqref{hdif1}, \eqref{hdif2} and \eqref{Intemediary2}, we estimate each of the above terms by $Cb_i a_i^{-2}$ and using \eqref{defab} we get
	\begin{equation}\label{gdif}
	|D^2(g_i(x))|\leq C 2^{(\frac{n}{d}-\beta)i}\quad \text{ for all }  x\in Q\bigl(0, \frac{a_{i-1}}{2}\bigr).  
	\end{equation}
	
{\underline{Step 4: Construction of $f$:}}	We build a sequence of homeomorphisms $f_i : [-1,1]^n \to [-1,1]^n$. Let us set $f_0 = x$ and define 
	$$
	f_i(x)=
	\begin{cases}
	f_{i-1}(x)& \quad x \in [-1,1]^n \setminus \bigcup_{v\in \mathbb{V}^i} Q'_v,\\
	f_{i-1}(z_v) + g_i(x-z_v)& \quad x \in Q'_v, v \in \mathbb{V}^i.
	\end{cases}
	$$
	By the definition, we change each $f_i$ only inside the cubes $Q'_v, v \in \mathbb{V}^i$ and $f_i$ coincides with $f_{i-1}$ on a neighborhood of the boundary of $\bigcup_{v\in \mathbb{V}^i} Q'_v$. \seb{Hence} $f_i$ is smooth on $(-1,1)^n$. 
	Clearly $f_i(x)=f_{i-1}(x)=\overline{x}$ in the first $(n-1)$-coordinates and hence they differ only in the $n$-th coordinate $(f_i)_n$. 
	Hence we can use \eqref{hdif1}, \eqref{Intemediary2} and computations similar to \eqref{nove} to obtain 
	$$
	\|Df_i - Df_{i-1}\|_{\infty}\leq \|D(f_i)_n\|_{L^\infty(\bigcup_{v\in \mathbb{V}^i} Q'_v)}+\|D(f_{i-1})_n\|_{L^\infty(\bigcup_{v\in \mathbb{V}^i} Q'_v)}\leq C 2^{-i\beta}. 
	$$
	This means that when we define
	$$
	f(x) = \lim\limits_{i\to\infty} f_i(x)
	$$
	we have that $f_i \to f$ in $\C^1$. As above we have $|D (f_i)_n(x)| \leq C2^{-i\beta}$ on $\C_Q$ for every $i$ and therefore
	$$J_f = 0 \text{ on }\C_Q \ \text{ and } \mathcal{H}^d(\C_Q) > 0.$$
	It is not hard to verify that $f$ is a homeomorphism as it is continuous and one-to-one. 

	We calculate $\|D^2f\|^q_q$ and $\|J^{-a}_f\|_1$. As $f_0$ is the identity, we estimate $\|D^2f\|_q$ as the sum
	$$
	\|D^2f\|_q \leq \|D^2f_0\|_q + \sum\limits_{i=1}\limits^\infty \|D^2f_i - D^2f_{i-1}\|_q. 
	$$
	By the construction, the set, where $f_i \ne f_{i-1}$ can be covered by $2^{ni}$ cubes $Q'_v, v \in \mathbb{V}^i$ and a measure of each cube is by \eqref{defab}
	$$
	|Q'_v| = (2a_i)^n =2^n 2^{-\frac{n^2}{d}i}.
	$$
	For $y \in Q'_v$ we have $f_i(y) = f_{i-1}(z_v)+g_i(y-z_v)$ 
	and we denote $x=y-z_v \in Q(0, \frac{a_{i-1}}{2})$. Since $f_{i-1}$ is an affine mapping inside $Q'_v$ and its second order derivatives are $0$ we have 
	$$
	|D^2f_i(y)-D^2f_{i-1}(y)| = |D^2f_i(x+z_v)| = |D^2g_i(x)|.
	$$
	Together with \eqref{gdif} we obtain
	$$
		\begin{aligned}
			\|D^2f\|^q_q &\leq \sum_{i=1}^\infty \Bigl(\sum_{v\in\mathbb{V}^i} \int_{Q'_v} |D^2f_i(y) - D^2f_{i-1}(y)|^q \, dy \Bigr)^{\frac{1}{q}}\\
			&\leq \sum\limits_{i=1}\limits^\infty \Bigl(2^{ni}\int_{Q'_v} |D^2g_i(x)|^q \, dx \Bigr)^{\frac{1}{q}}\leq C \sum\limits_{i=1}\limits^\infty \Bigl(2^{ni} |Q'_v| 2^{q(\frac{n}{d}-\beta)i}\Bigr)^{\frac{1}{q}}\\
			& \leq C \sum\limits_{i=1}\limits^\infty 2^{(\frac{n}{d}-\beta)i-\frac{n^2}{qd}i+\frac{ni}{q}}.
		\end{aligned}
	$$
	Now we recall that \eqref{FixBeta} gives 
	$$
	(\frac{n}{d}-\beta)-\frac{n^2}{qd}+\frac{n}{q}  < 0
	$$
	and therefore $f\in W^{2,q}((-1,1)^n, \rn)$.
	
	Next we calculate $J_f$ on the set $Q'_v\setminus Q_v$, for $v \in \mathbb{V}^i$. As before, for every $y \in Q'_v\setminus Q_v$ denote $x=y-z_v \in Q(0, \frac{a_{i-1}}{2})$ and so $J_{f}(y) = J_{g_i}(x)$. By \eqref{defgi} we have 
	$$
	J_{g_i}(x) =\frac{\partial(g_i)_n}{\partial x_n} (x) = \frac{\partial \bar {h}_i}{\partial x_n}(x)
	$$ 
	and using \eqref{defhi}, \eqref{hdif1} and \eqref{defab}
	$$
	\frac{\partial \bar {h}_i}{\partial x_n}(\bar{x}, x^n) = \prod\limits_{j=1}^{n-1}\lambda(|x^j|)h'_i(|x^n|) + (1-\prod\limits_{j=1}^{n-1}\lambda(|x^j|))\frac{b_{i-1}}{a_{i-1}} \approx C\frac{b_{i-1}}{a_{i-1}} = C2^{-i\beta}.
	$$
	Then
	$$
		\begin{aligned}
			\|J^{-a}_f\| &\leq C\sum_{i=1}^\infty 2^{ni} |Q'_v| 2^{a\beta i} \leq C \sum_{i=1}^\infty 2^{a\beta i-\frac{n^2}{d}i+ni}.
		\end{aligned}
	$$
	Again using \eqref{FixBeta} we get
	$$
	a\beta -\frac{n^2}{d}+n < 0
	$$
	and therefore $J_f^{-a} \in L^1$. 
\end{proof}

\section{Other counterexamples - Folding, Ball's example and dense $\{J_f=0\}$}

First we give two examples showing that $f\in W^{2,q}$ with $|J_f|^{-a}\in L^1$ might not be a homeomorphism for some values of $q$ and $a$. There seems to be a gap between these counterexamples and positive statement in Theorem \ref{main}. 

\subsection{Folding counterexample to $\haus^{n-1}(\{J_f=0\})=0$ and $J_f\geq 0$ a.e.}

From Theorem \ref{technical} and Corollary \ref{cor} we know that for $q>n$ and $(1-\frac{1}{q})a\geq 1$ we obtain that $\haus^{n-1}(\{J_f=0\})=0$ and thus $J_f$ does not change sign. The following example show the sharpness of this condition.  

\prt{Example}
\begin{proclaim}
Let $n\geq 2$, $q\geq 1$ and $(1-\frac{1}{q})a< 1$. Then there is a continuous mapping $f\in W^{2,q}([-1,1]^n,[-1,1]^n)$ with continuous $Df$ such that $f(x)=x$ on $\partial[-1,1]^n$ and  $|J_f|^{-a}\in L^1((-1,1)^n)$ but 
$$
\{0\}\times(-1,1)^{n-1}\subset\{J_f=0\}\text{ and both } \{J_f>0\}\text{ and }\{J_f<0\}\text{ have positive measure}. 
$$
Moreover, this $f$ is not a homeomorphism. 
\end{proclaim}
\begin{proof}
Since $(1-\frac{1}{q})a< 1$ we can choose $\alpha \in (2-\frac{1}{q}, 1 + \frac{1}{a})$. 
As a first step consider the mapping
$$
g(x)=g(x_1, \dots, x_n) = [|x_1|^\alpha\sgn(x_1), x_2, \dots, x_n].
$$
It is not hard to see, that $J_g(x) =\alpha |x_1|^{\alpha-1}$ and hence $\{x_1=0\}\subset\{J_f=0\}$ and $\{J_g=0\}$ has big $(n-1)$-dimensional projection. At the same time
$$
\alpha<1 + \frac{1}{a}\text{ implies }(\alpha -1)a < 1\text{ and hence }J^{-a}_g \in L^1
$$
and 
$$
\alpha > 2-\frac{1}{q}\text{ implies }(2-\alpha)q<1\text{ and hence }D^2g\in L^q.
$$

It remains to fold this mapping. Let us denote $\x=[x_2,\hdots,x_{n}]$. It is clear that the function
$$
h(\tilde{x})=\min\{0,-\tfrac{1}{2}(1-\|\x\|^2)^3\}
$$
is $C^2$, $-\tfrac{1}{2}\leq h(\tilde{x})\leq 0$ and it is supported in $\er\times [-1,1]^{n-1}$. Since $\alpha\geq 1$ it is also easy to check that $|h(x)|^{\alpha}\in C^2$. 
We want to construct a mapping such that
\eqn{negjac}
$$
A:=\bigl\{x\in[-1,1]^n:\ h(\tilde{x})<x_1<0\bigr\}=\{J_f<0\}.
$$ 
$$
f(x)=\begin{cases}
[x_1^\alpha, x_2, \dots, x_n]&\text{ for }x_1\geq 0,\\
[2^{\alpha-1}|x_1|^\alpha, x_2, \dots, x_n]&\text{ for }\tfrac{1}{2} h(\tilde{x})<x_1<0.\\
[-2^{\alpha-1}(x_1-h(\tilde{x}))^\alpha+|h(\tilde{x})|^{\alpha}, x_2, \dots, x_n]&\text{ for }h(\tilde{x})\leq x_1\leq \tfrac{1}{2} h(\tilde{x}),\\
\bigl[\frac{-1-|h(\tilde{x})|^{\alpha}}{(h(\tilde{x})+1)^{\alpha}}(h(\tilde{x})-x_1)^\alpha+|h(\tilde{x})|^{\alpha}, x_2, \dots, x_n\bigr]&\text{ for }x_1\leq h(\tilde{x}).\\
\end{cases}
$$
Clearly for $x_1=0$, $x_1=\tfrac{1}{2}h(\tilde{x})$ and $x_1=h(\tilde{x})$ the formulas agree so our mapping is continuous. It is also easy to see that for $x_1=-1$ we obtain $f(x)=x$ so our mapping is identity on the boundary. 
Further 
$$
\text{we map }\{h(\tilde{x})<x_1<0\}\text{ onto }\{0<x_1<|h(\tilde{x})|^{\alpha}\}
$$ 
and an easy computation shows that $J_f=\frac{\partial f_1}{\partial x_1}<0$ there. Moreover, it is easy to see that $f$ is not a homeomorphism as each point in $\{0<x_1<|h(\tilde{x})|^{\alpha}\}$ has three preimages. 
It is also not difficult to see that $\{J_f=0\}=\{x_1=0\}\cup\{x_1=h(\tilde{x})\}$. An elementary computation also shows that $Df$ is continuous at $\{x_1=\tfrac{1}{2}h(\tilde{x})\}$ and at $\{x_1=h(\tilde{x})\}$ (note that $|x_1-h(\tilde{x})|^{\alpha-1}=0$ there as $\alpha>1$). 

We know that $h(\tilde{x})$ and $|h(\tilde{x})|$ are $C^2$ and $h(\tilde{x})+1$ is bounded away from $0$. To obtain that $f\in W^{2,q}$ it is thus enough to show integrability of
$$
|x_1|^{(\alpha-2)q}\text{ and }|x_1-h(\tilde{x})|^{(\alpha-2)q}
$$
and this is analogous to integrability of $D^2g$ in the first paragraph using $(2-\alpha)q<1$. Clearly $J_f=\frac{\partial f_1}{\partial x_1}$ so we need to check integrability of 
$$
|x_1|^{(1-\alpha)a}\text{ and }|x_1-h(\tilde{x})|^{(1-\alpha)a}
$$
to obtain $J_f^{-a}\in L^1$ and this follows using $(\alpha -1)a < 1$.
\end{proof}

\subsection{Ball's type example - mapping of a segment to a point} It was known already to J. Ball \cite{Ball} that there are Sobolev mappings with other integrable ``terms" that are equal to a homeomorphism on the boundary but are not a homeomorphism as they map a segment to a point. We give here a generalization of this mapping under our assumptions $f\in W^{2,q}$ and $|J_f|^{-a}\in L^1$.

\prt{Example}
\begin{proclaim}
Let $n\geq 2$, $q\geq 1$ and let $a(\frac{2}{n-1}-\frac{1}{q})<1$. Then there is a continuous mapping $f\in W^{2,q}([-1,1]^{n-1}\times[-2,2],\rn)$ such that 
$f$ is a homeomorphism on $\partial ([-1,1]^{n-1}\times[-2,2])$, $|J_f|^{-a}\in L^1([-1,1]^{n-1}\times[-2,2])$, but $f$ is not a homeomorphism as $f(\{0\}^{n-1} \times [-1,1])=\{0\}^n$. 
\end{proclaim}		

\begin{proof}
Since $a(\frac{2}{n-1}-\frac{1}{q})<1$ we can choose $\beta>0$ such that 
\eqn{defbeta}
$$
\frac{n-1}{a}>\beta > 2 - \frac{n-1}{q}. 
$$
Let us denote $|\bar{x}| =\sqrt{x_1^2 + \dots + x_{n-1}^2}$ and define
$f: [-1,1]^n \to \mathbb{R}^n$ as
$$
f(x) = f(x_1,\dots, x_n) = [x_1, \dots, x_{n-1}, |\bar{x}|^\beta x_n].
$$
Then, $f$ is not a homeomorphism since $f(\{0\}^{n-1} \times [-1,1])=[0,\hdots,0]$. A direct computation gives us 
$$
J_f(x) = |\bar{x}|^\beta\text{ and } |D^2f(x)| \leq C|\bar{x}|^{\beta-2}. 
$$
Now \eqref{defbeta} clearly implies that $f \in W^{2,q}((-1,1)^n, \mathbb{R}^n)$ and $|J^{-a}_f| \in L^1((-1,1)^n)$. 

Now we consider the extension of this mapping to the mapping $\tilde f: [-1,1]^{n-1} \times [-2,2] \to \mathbb{R}^n$ of the form
$$
\tilde f(x) = 
\begin{cases}
    f(x), & \quad x \in [-1,1]^{n},\\
    \bigl[x_1, \dots, x_{n-1}, \sgn(x_n)((|x_n|-1)^2+|\bar{x}|^\beta|x_n|)\bigr], & \quad |x_n|>1.\\
\end{cases}
$$
It is not difficult to see that this mapping is a homeomorphism on $\partial ([-1,1]^{n-1}\times[-2,2])$. It's also easy to see that $|D^2f|$ is bounded on $(-1,1)^{n-1} \times ((-2,2)\setminus[-1,1])$ and thus $\seb{f\in}W^{2,q}((-1,1)^{n-1} \times (-2,2))$. For this mapping we have 
$$
J_{\tilde f}(x) = 2(|x_n|-1) + |\bar{x}|^\beta\geq |\bar{x}|^\beta\text{ for }|x_n|>1
$$ 
and hence $J^{-a}_f \in L^1((-1,1)^{n-1} \times (-2,2))$. 
\end{proof}

\subsection{Example with dense set of $\{J_f=0\}$} 

We know that for $q>n$ the derivative $Df$ of $W^{2,q}$ mapping is continuous and thus the set is $\{J_f=0\}$ is closed. In our Theorem \ref{technical} we show that even for $q\leq n$ the set $\{J_f=0\}$ has small Hausdorff dimension. However, it is good to know that the set $\{J_f=0\}$ might be dense (even though it has small Hausdorff dimension) even for $W^{2,n}$ mapping with $J_f^{-a}\in L^{1}$ for arbitrarily large $a$. For simplicity we construct this only for $n=2$ but this can be easily done in any dimension $n\geq 2$. 

\prt{Example}
\begin{proclaim} Let $n=2$. 
		There exists a homeomorphism $f\in W^{2,2}(B(0,1),\er^2)$ with $J_f^{-a}\in L^{1}(B(0,1))$ for every $a>0$, $f(x) =x$ on $\partial B(0,1)$ such that the set $\{J_f = 0\}$ is dense in $B(0,1)$. 
\end{proclaim}		
\begin{proof}	
	In order to construct our example we will need a family of mappings $f_{x,r}$ for $0<r<\frac{1}{10}$ and $x\in B(0,1)$. We define the preliminary functions
	\eqn{defht}
	$$
		\tilde{h}_{3r}(t) =\begin{cases}
		\frac{t\log r^{-1}}{\log t^{-1}}\quad&\text{ for }0<t\leq r,\\
		(1+\frac{1}{\log r^{-1}})t - \frac{r}{\log r^{-1}}\quad&\text{ for }r\leq t\leq \frac{3}{2}r,\\
		(1-\frac{1}{\log r^{-1}})t + \frac{2r}{\log r^{-1}} \quad&\text{ for } \frac{3}{2}r\leq t\leq 2r,\\
		t \quad&\text{ for } t\geq 2r\\
		\end{cases}
	$$
	and it is easy to check that $\tilde{h}_{3r}(t)$ is continuous and that its derivative is continuous also at $t=r$. We consider the usual convolution kernel $\varphi_r$ (see Step 2 of proof in Section \ref{ahoj}) and we smoothen this function as 	
	$$
		h_{3r}(t) = \begin{cases}
		\tilde{h}_{3r}(t)\quad&\text{ for }t\leq \tfrac{5}{4}r,\\
		\int^{r/4}_{-r/4}\tilde{h}_{3r}(t+s)\varphi_{\frac{r}{4}}(s)\,ds\quad&\text{ for } t\geq \tfrac{5}{4}r.\\
		\end{cases}
	$$
	This function $h_{3r}(t)$ is clearly $C^1$ and belongs to $W_{\loc}^{2,\infty}$ for $t>0$.  Moreover, $h_{3r}(t) = \frac{t\log r^{-1}}{\log t^{-1}}$ on $[0,r)$ and $h_{3r}(t) = t$ for $t\geq 3r$. We define
	$$
		f_{x,r}(y) = \begin{cases}
			x \quad &y=x\\
			x+ h_r(|y-x|)\frac{y-x}{|y-x|} \quad &y\neq x
		\end{cases}
	$$
	and so $f_{x,r} \in \C(B(0,1), \er^2)\cap\C^1(B(0,1)\setminus\{x\}, \er^2)\cap W_{\loc}^{2,\infty}(B(0,1)\setminus\{x\}, \er^2)$ with
	\begin{equation}\label{EqIdentity}
		f_{x,r}(y) = y \quad \text{for} \  \ y\in \er^2\setminus B(x,r).
	\end{equation}
	
	We claim that for the family $f_{x,r}$ the following hold, 
	\begin{equation}\label{DfLinfty}
		\|Df_{x,r}\|_{\infty, \er^2} \leq 2 \qquad \text{for all } x\in \er^2, 0<r<1,
	\end{equation}
	\begin{equation}\label{EqD2o1}
		\lim_{r\to 0}\int_{B(x,r)}|D^2 f_{x,r}|^2 = 0,
	\end{equation}
	\begin{equation}\label{EqJao1}
		\lim_{r\to 0}\int_{B(x,r)}\exp \big(\lambda J_{f_{x,r}}^{-1/3}\big) = 0 \ \text{for all }\lambda >0, \qquad J_{f_{x,r}}(y) >0 \ \ \text{ for all } y\neq x. 
	\end{equation}
	To prove \eqref{DfLinfty} it is enough to consider that using \eqref{der}
	$$
		|Df_{x,r}(y+x)| = |Df_{0,r}(y)| = \max\Big\{ h_{r}'(|y|), \frac{h_r(|y|)}{|y|} \Big\}
	$$
	and calculating $h_r'(t) \leq \|\tilde{h}_r' \|_{\infty}\leq (1+\frac{1}{\log3 r^{-1}})\leq 2$.
	
	By Lemma \ref{radial} we obtain 
	$$
	|D^2f_{x,r}(y+x)| = |D^2f_{0,r}(y)| \approx \max\Bigl\{h''_r(|y|),
	\Bigl|\frac{h'_r(|y|)}{|y|}-\frac{h_r(|y|)}{|y|^2}\Bigr|\Bigr\}. 
	$$
	Clearly 
	$$
		|h''_r(|y|)|\approx\frac{|\log 3r^{-1}|}{|y|\log^2(|y|^{-1})}
		\text{ and }
		\Bigl|\frac{h'_r(|y|)}{|y|} - \frac{h_r(|y|)}{|y|^2}\Bigr|\approx \frac{-\log 3r^{-1}}{|y|\log^2(|y|)}
		\qquad \text{for } |y|\leq \tfrac{1}{3}r.
	$$
For $|y|>\tfrac{1}{3}r$ we can easily estimate
$$
\begin{aligned}
\Bigl|\frac{\tilde{h}'_r(|y|)}{|y|}-\frac{1}{|y|}\Bigr|+
\Bigl|\frac{\tilde{h}_r(|y|)}{|y|^2}-\frac{1}{|y|}\Bigr|
\leq C\frac{1}{r\log r^{-1}}
\leq C \frac{|\log r^{-1}|}{|y|\log^2(|y|^{-1})}
\end{aligned}
$$
and therefore it is not difficult (using e.g. that convolution of identity is identity and convolution of constant is constant) to deduce that for $|y|>\frac{r}{3}$ 
$$
\Bigl|\frac{{h}'_r(|y|)}{|y|}-\frac{{h}_r(|y|)}{|y|^2}\Bigr|\leq C \frac{|\log r^{-1}|}{|y|\log^2(|y|^{-1})}. 
$$
To estimate $|h''_r(|y|)|$ for $|y|>\tfrac{1}{3}r$ note that $\tilde{h}''_r(t)=0$ for $t\in (\frac{r}{3},\frac{r}{2})$, for $t\in (\frac{r}{2},\frac{2r}{3})$ and for $t>\frac{2r}{3}$ (see \eqref{defht}). 
It follows that the derivative of convolution $|h''_r(|y|)|$ can be estimated as the size of the jump of $\tilde{h}'_r$ at $t=\frac{r}{2}$ or $t=\frac{2r}{3}$ divided by the length of the convolution kernel, i.e. 
$$
|h''_r(|y|)|\leq C \frac{1}{\log 3r^{-1}}\frac{12}{r}\approx \frac{|\log r^{-1}|}{|y|\log^2(|y|^{-1})}. 
$$
	Now integrating in polar coordinates 
	$$
		\int_{B(x,r)}|D^2f_{x,r}|^2 \ d\mathcal{L}_2 \leq C\int_0^r \frac{\log^2 r^{-1}}{t^2 \log^4(t^{-1})} t\ dt\leq C\frac{\log^2 r^{-1}}{\log^3 r^{-1}} \xrightarrow{r\to 0} 0.
	$$
	
	To prove \eqref{EqJao1} we have using \eqref{der}
	$$
		J_{f_{x,r}}(y+x) = J_{f_{0,r}}(y) = \frac{h'_r(|y|)h(|y|)}{|y|} 
		\geq C \frac{\log^2 (3r^{-1})}{\log^2 (|y|^{-1})}
		\geq C \frac{1}{\log^2 (|y|^{-1})}
	$$
	and so for any $\lambda >0$ we have
	$$
		\int_{B(x,r)}\exp\big(\lambda J_{f_{x,r}}^{-1/3}\big) \leq C \int_0^r t\exp\Big(\lambda \sqrt[3]{\log^2 t^{-1}}\Big) \ dt \xrightarrow{r \to 0} 0
	$$
	since even  $t\exp\Big(\lambda \sqrt[3]{\log^2 t^{-1}}\Big) \xrightarrow{t\to 0 +}0$.
	
Let us choose a dense countable set of distinct points $\{x_i\}_{i=1}^{\infty}\subset B(0,1)$. We want to construct a mapping such that $J_f(x_i)=0$ for all $i$. 
By induction we construct a set of radii $r_i$ such that $B(x_i,r_i)\subset B(0,1)$ where we choose $r_i$ small enough in each step. We assume that we have already chosen $r_1,\hdots,r_i$  and we want to choose $r_{i+1}$. We fix $\r_{i+1}$ such that 
\begin{equation}\label{DensityPoint}
0<\r_{i+1}<\min\{1-|x_{i+1}|, 4^{-i}|x_{i+1}-x_1|^2,\hdots, 4^{-i}|x_{i+1}-x_i|^2\}
\end{equation}
and we want to choose $0<r<\r_{i+1}$ small enough. 
We define $g_0(x) = x$ and then 
	$$
		g_i = f_{x_{i}, r_{i}}\circ g_{i-1}=f_{x_i,r_i}\circ \dots \circ f_{x_2,r_2}\circ f_{x_1,r_1} \quad \text{and}\quad g_{i+1,r} = f_{x_{i+1}, r}\circ g_i.
	$$
	We have using \eqref{EqIdentity}
	\eqn{odhad}
	$$
		\int_{B(0,1)}\big||D^2g_{i}(x)| - |D^2g_{i+1,r}(x)|\big|^2 \ dx \leq \int_{g^{-1}(B(x_{i+1}, r))} \bigl(3 |D^2g_{i}(x)|^2 + 3|D^2g_{i+1,r}(x)|^2\bigr) \ dx.
	$$
	We estimate the last term using chain rule and \eqref{DfLinfty} as 
	$$
	\begin{aligned}
	\bigl|D^2g_{i+1,r}(x)\bigr|&=\bigl|D^2 f_{x_{i+1}, r}\circ g_i(x)\bigr|=\Bigl|D\bigl(Df_{x_{i+1},r}(g_i(x))Dg_i(x)\bigr)\Bigr|\\
	&\leq |D^2 f_{x_{i+1}, r}(g_i(x))|\cdot |Dg_i(x)|\cdot |Dg_i(x)|+|Df_{x_{i+1}, r}(g_i(x))|\cdot |D^2g_i(x)|\\
	&\leq |D^2 f_{x_{i+1}, r}(g_i(x))|\cdot 2^{2i}+2\cdot |D^2g_i(x)|.\\
	\end{aligned}
	$$
	We set $y = g_{i}^{-1}(x_{i+1})$ and we assume that $r$ is so small that $D^2g_i\approx D^2g_i(y)$ and $J_{g_i}\approx J_{g_i}(y)$ on the whole ball $B(x_{i+1},r)$.  
	By \eqref{odhad} we thus obtain (for our small enough $r$) 
	$$
		\begin{aligned}
			\int_{B(0,1)}\big||D^2g_{i}(x)| - |D^2g_{i+1,r}(x)|\big|^2 \ dx \leq&Cr^2|D^2g_i(y)|^2+\\
			+ C\big(2^{2i} + |D^2g_i(y)|^2\big) &\int_{g^{-1}_i(B(x_{i+1}, r))}(|D^2f_{x_{i+1},r}(g_i(x))|^2 + 4) \ dx.
		\end{aligned}
	$$
	Using \eqref{EqJao1} (specifically that $J_{g_i}(y)>0$) and finally \eqref{EqD2o1} we get
	\begin{equation}\label{DGood}
		\begin{aligned}
			\int_{B(0,1)}\big||D^2g_{i}(x)| -& |D^2g_{i+1,r}(x)|\big|^2 \ dx \\
			&\leq C_{i,y}\Big( r^2 + \int_{B(x_{i+1}, r)}|D^2f_{x_{i+1}, r}(z)|J_{g_i}^{-1}(y)   \ dz \Big) \xrightarrow{r \to 0}0.
		\end{aligned}
	\end{equation}
	Using the same arguments (and finally \eqref{EqJao1} in place of \eqref{EqD2o1}),
	\begin{equation}\label{JGood}
			\begin{aligned}
			\int_{B(0,1)}\big|\exp\big(J_{g_{i}}^{-1/3}(x)\big) &- \exp\big(J_{g_{i+1,r}}^{-1/3}(x)\big)\big|\; dx\leq  \\
			&\leq \int_{g^{-1}(B(x_{i+1}, r))}\bigl(|\exp\big(J_{g_{i}}^{-1/3}(x)\big)| + |\exp\big(J_{g_{i+1,r}}^{-1/3}(x)\big)|\bigr) \ dx\\
			&\leq C_{i,y}r^2+ \int_{g^{-1}_i(B(x_{i+1}, r))} \exp\big(CJ_{g_{i}}^{-1/3}(y)J_{f_{x_{i+1,r}}}^{-1/3}(x)\big) \ dx \\
			&\leq C_{i,y}\Big( r^2 + \int_{B(x_{i+1}, r)}\exp\big(\lambda_{i,y} J_{f_{x_{i+1,r}}}^{-1/3}(z)\big)   \ dz \Big) \xrightarrow{r \to 0} 0.
		\end{aligned}
	\end{equation}
	
We use \eqref{DGood} and \eqref{JGood} to find $0< r_{i+1}=r < \tilde{r}_{i+1}$ small enough such that 
	$$
		\int_{B(0,1)}\big||D^2g_{i+1}| - |D^2g_{i}|\big|^2 \leq 2^{-2i}
	$$
	and
	$$
		\int_{B(0,1)}\big|\exp\big(J_{g_{i}}^{-1/3}(x)\big)- \exp\big(J_{g_{i+1}}^{-1/3}(x)\big)\big| \ dx <2^{-i}. 
	$$
	We call $f(x) = \lim_{i\to \infty}g_i(x)$. Then $g_i$ is a Cauchy sequence in $W^{2,2}$ and $f\in W^{2,2}$ with 
	$$
		\|D^2 f\|_2 \leq \sum_{i=0}^\infty \|D^2g_{i+1} - D^2g_{i}\|_2 \leq 2.
	$$
	 This (and the boundedness of $f$) immediately gives us the uniform convergence of $g_i$ on $B(0,1)$. It is obvious that $f$, as the composition of injective maps, is injective and hence a homeomorphism with $f(x)=x$ on $\partial B(0,1)$. 
	 
	 Further we have that $\exp\big(J_{g_{i}}^{-1/3}(x)\big)$ is a Cauchy sequence in $L^1$ (and since $J_{g_i}(x) = 0$ exactly if $x$ equals some $x_i$) we have $J_{g_i}(x) \to J_f(x)$ almost everywhere. Moreover, 
	$$
			\int_{B(0,1)}\exp\big(J_{f}^{-1/3}(x)\big) <\infty 
	$$
	implies that $\int_{B(0,1)} J_f^{-a}<\infty$ for any $a>0$. 
	
It is not difficult to find out that $Dg_k(x_k)=0$ and hence $J_{g_k}(x_k)=0$. We have
	$$
		\begin{aligned}
			\oint_{B(x_k,r)}|J_f(x)-J_f(x_k)|\; dx &\leq Cr^{-2} \int_{B(x_k,r)} J_{g_k}(x) \; dx+ Cr^{-2} \sum_{x_i\in B(x_k, r)} \int_{B(x_i,r_i)} J_{g_i}(x) \; dx.
		\end{aligned}
	$$
	Since $|Dg_k(x)| \leq \frac{C_k}{\log t^{-1}}$ on $B(x_k, r)\setminus\{x_k\}$ we have that the first term tends to zero as $r\to 0^+$. For the second term we use the condition $r_i < \min\{4^{-i}|x_i - x_j|^2: 1\leq j<i\}$  (see \eqref{DensityPoint}) which implies that $i\geq k$ for every $x_i\in B(x_k, r)$ and $|Dg_i|\leq 2^i$, i.e. $J_{g_i} \leq 4^i$ to get
	$$
		\begin{aligned}
			r^{-2} \sum_{x_i\in B(x_k, r)} \int_{B(x_i,r)} J_{g_i}(x) \; dx & \leq Cr^{-2} \sum_{x_i\in B(x_k, r)} 4^{i}4^{-2i}r^{4} \leq Cr^2
		\end{aligned}
	$$
	and so these terms also tend to zero as $r\to 0^+$. Therefore $J_f(x_k)=0$ is a Lebesgue value and the set $\{J_f = 0\}$ is dense.
	
	It is not difficult to see that by appropriate choice of $\{r_{i}\}$ we can even make $\int_{B(0,1)}|D^2f| + |J_f|^{-a}\; dx$ as close to $|B(0,1)|$ as we like. 
	
\end{proof}



\section{Other properties}

In this short section we give two applications where it is useful to know the size of the critical set $\{J_f=0\}$. 

\subsection{Approximation by smooth homeomorphisms}

We show that the usual convolution approximations are also a homeomorphism if we are far away from the critical set $\{J_f=0\}$ for $q>n$ and $f$ satisfying the assumptions of Theorem \ref{main}. 

\begin{prop}\label{Approximation}
	For $n\geq 3$ we assume that $q>n$ and $a>n-1$. For $n=2$ we assume that $q>2$, $a\geq 1$ and $(\frac{3}{2}-\frac{2}{q})a\geq 1$. Let $\Omega \subset \rn$ be a Lipschitz domain, let $f_0:\overline{\Omega} \to \rn$ be a homeomorphism and let $f\in W^{2,q}(\Omega, \rn)$ be such that $f=f_0$ on $\partial\Omega$ and $|J_f|^{-a} \in L^1(\Omega)$. Let $\delta >0$ and let an open $G\subset\overline{G}\subset \Omega$ satisfy $J_f \geq3\delta >0$ on $G$. Then there exists a $k_0\in \en$ such that $f_k = f*\psi_k$, the standard convolution approximation of $f$, is a homeomorphism on $G$ for each $k\geq k_0$. 
\end{prop}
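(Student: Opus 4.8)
The plan is to leverage Theorem~\ref{main}, which (since $q>n$, and the remaining hypotheses match those of Theorem~\ref{main}) already guarantees that $f$ is a homeomorphism of $\Omega$, and to show that the smooth mollifications $f_k=f*\psi_k$ cannot develop a self-intersection inside the region $G$, where $J_f$ is bounded below. Since $q>n$, Sobolev embedding gives a representative with $Df\in C^{0,1-n/q}_{\loc}(\Omega,\rn)$, so $J_f$ is continuous; in particular the closed set $\{J_f\ge 3\delta\}$ contains $\overline G$, and $\dist(\overline G,\partial\Omega)>0$. First I would fix a neighbourhood $U$ with $\overline G\subset U$, $\overline U\subset\Omega$, so that $f_k$ is defined and smooth on $U$ for all large $k$; then $Df_k=(Df)*\psi_k\to Df$ uniformly on $\overline U$, and $[Df_k]_{C^{0,1-n/q}(\overline U)}\le[Df]_{C^{0,1-n/q}(\overline U)}$ because convolution does not increase the H\"older seminorm. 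Consequently $J_{f_k}\to J_f$ uniformly on $\overline G$, and there is $k_1$ with $J_{f_k}\ge 2\delta$ on $\overline G$ for $k\ge k_1$; each such $f_k$ is then a smooth local diffeomorphism on $G$, hence an open map there.

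The real work is to pass from local to global injectivity on $G$ with constants independent of $k$. For $x_0\in\overline G$ and $k\ge k_1$ the matrices $Df_k(x_0)$ are uniformly bounded (by $\|Df\|_{L^\infty}$ on a neighbourhood of $\overline U$) and have $\det\ge 2\delta$, so by the adjugate formula $\|Df_k(x_0)^{-1}\|\le M$ with $M=M(\delta,\|Df\|_{L^\infty})$. Using that $\{Df_k\}_{k\ge k_1}$ is equicontinuous on $\overline U$ (same H\"older exponent and a uniform seminorm bound), I would pick $\rho>0$ --- depending only on $M$ and the modulus of continuity, hence uniform in $k$ --- with $\rho<\dist(\overline G,\partial U)$ and such that $\|Df_k(x)-Df_k(x_0)\|\le\tfrac1{2M}$ for all $x\in B(x_0,\rho)$. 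The elementary perturbation-of-a-linear-map lemma then yields that $f_k$ is injective on each ball $B(x_0,\rho)$, $x_0\in\overline G$, with $|f_k(x)-f_k(y)|\ge\tfrac1{2M}|x-y|$ there; in particular $f_k(x)\ne f_k(y)$ whenever $x,y\in\overline G$ and $0<|x-y|<\rho$.

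It remains to exclude collisions of points at distance $\ge\rho$. Since $f$ is a homeomorphism and $\overline G$ is compact, $f^{-1}$ is uniformly continuous on $f(\overline G)$, so there is $\eta>0$ with $|f(x)-f(y)|\ge\eta$ whenever $x,y\in\overline G$ and $|x-y|\ge\rho$. Using $\sup_{\overline G}|f_k-f|\to 0$ (a consequence of the uniform convergence above), choose $k_0\ge k_1$ with $\sup_{\overline G}|f_k-f|<\eta/3$ for $k\ge k_0$; then for such $k$ and $x,y\in\overline G$ with $|x-y|\ge\rho$ one has $|f_k(x)-f_k(y)|\ge\eta-\tfrac23\eta>0$. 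Combining the two distance ranges, $f_k$ is injective on $G$ for every $k\ge k_0$, and since it is also continuous and open on $G$ it is a homeomorphism onto $f_k(G)$.

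I expect the main obstacle to be purely one of uniformity: everything hinges on the injectivity radius $\rho$, the constant $M$, and the separation $\eta$ being independent of $k$. That is guaranteed by the two basic facts that $Df_k\to Df$ uniformly on compact subsets of $\Omega$ and that convolution does not increase the $C^{0,1-n/q}$ seminorm, so that $\{Df_k\}$ is uniformly bounded and equicontinuous on $\overline U$; with this in hand the rest is routine. Note that the second gradient does not enter the argument except through the embedding $W^{2,q}\hookrightarrow C^{1,1-n/q}$ for $q>n$, which is why the case $q\le n$ is excluded here.
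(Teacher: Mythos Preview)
Your proof is correct and reaches the same conclusion, but by a genuinely different route than the paper's. The paper argues via topological degree: it first enlarges $G$ slightly to a domain $U$ on which $J_f\ge 2\delta$ and hence (after choosing $k_0$) $J_{f_k}\ge\delta$; then, using $\dist(f(\partial U),f(G))=3\eta>0$ and $\|f_k-f\|_{L^\infty(U)}\le\eta$, it invokes homotopy invariance to get $\deg(f_k(x),f_k,U)=\deg(f_k(x),f,U)=1$, and since $J_{f_k}>0$ on $U$ this degree equals the cardinality of $f_k^{-1}(f_k(x))\cap U$, forcing injectivity.

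Your argument instead produces a uniform injectivity radius $\rho$ from the equicontinuity of $\{Df_k\}$ and the uniform lower bound on $J_{f_k}$, handling nearby points by the quantitative perturbation-of-a-linear-map lemma, and then rules out collisions at distance $\ge\rho$ by a compactness/uniform-continuity argument for the homeomorphism $f$ on $\overline G$. This is more elementary in that it avoids degree theory entirely and yields a quantitative bi-Lipschitz estimate on small balls as a by-product. The paper's approach is a bit shorter and needs only $J_{f_k}>0$ on the larger set $U$ rather than a modulus-of-continuity bound on $Df_k$; it also sidesteps the two-scale case split. Both approaches rest on exactly the same ingredients you identify: Theorem~\ref{main} for the global injectivity of $f$, the embedding $W^{2,q}\hookrightarrow C^{1,1-n/q}$ for $q>n$, and uniform convergence of $f_k$ and $Df_k$ on compact subsets.
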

\begin{proof} Since $q>n$ we have that $Df$ and so also $J_f$ are $(1-\frac{n}{q})$-H\"older continuous. Therefore there exists $k_1\in \en$ and a domain $U$ such that 
$$
G\subset G+B(0,k_1^{-1})\subset U\subset U+B(0,k_1^{-1})\subset \Omega\text{  such that }
J_f(x)\geq 2\delta\text{ for all }x\in U.
$$ 
By Theorem~\ref{main} we have that $f$ is a homeomorphism on $\Omega$ and so $\dist (f(\partial U), f(G))=:3\eta>0$. We know that convolution approximations $f_k$ converge to $f$ strongly in $W^{2,q}$ (see e.g. \cite[Chapter 4, Theorem 1 $(v)$ and $(vi)$]{EG}) and hence the Sobolev embedding theorem gives us 
$$
\|f_k-f\|_{L^\infty(U)} \to 0\text{ and }\|J_{f_k}-J_f\|_{L^\infty(U)} \to 0.
$$ 
Therefore we find a $k_0\geq k_1$ such that $\|f_k-f\|_{\infty, U} \leq \eta$ and $J_{f_k}(x)\geq \delta$ for all $k\geq k_0$ and all $x\in U$.
	
	Next, for every $y\in f_k(G)$ we have that $y$ is a regular value for $f_k$ on $G$ i.e.\ for all $x \in f_k^{-1}(y)\cap G$ we have that $Df(x)$ exists and $J_f(x)\neq 0$. Therefore, for $k\geq k_0$, we have that $\haus^0(f^{-1}_k(y)) = \deg(y,f_k,G)$ (as $J_{f_k}>0$ on $G$, see e.g. \cite[proof of Lemma 3.11]{HK14}). Assume that $x\in G$ then for every $z\in f(\partial U)$ we have
	$$
		|f_k(x) - z| \geq |f(x) - z| - |f_k(x) - f(x)| \geq 3\eta - \eta =2\eta
	$$
	Using the fact that $\|f_k-f\|_{\infty, U} \leq \eta$ and the homotopic property of the degree (see e.g. \cite[Theorem 2.3. (1)]{FG}) we have that 
	$$
	\deg(f_k(x),f_k,U) = \deg(f_k(x),f,U) = 1
	$$ 
	because $f$ is a homeomorphism. But then $f^{-1}_k(f_k(x))\cap G= \{x\}$ is a singleton and therefore $f_k$ is injective on $G$.
\end{proof}

\subsection{Euler-Lagrange equation}
It has been shown by Healey and Kr\"omer in \cite{HK}, that the minimization problem of finding
\begin{equation}\label{Problem}
\min \big\{E(f);  \  f\in W^{2,q}(\Omega, \er^n),\ f=f_0\text{ on }\partial\Omega,\  J_f >0 \text{ a.e.} \big\},
\end{equation}
where
\begin{equation}\label{EnergyDef}
	E(f) = \int_{\Omega} \bigl(\Psi(D^2 f(x)) + \Phi(Df(x))\bigr) dx
\end{equation}
has a minimizer under suitable conditions. 
Further, under the condition that
\eqn{GrowDPsi}
$$
\begin{aligned}
	\Phi(A) &\in \C^1\text{ on }\{A:\ \det A>0\},\ \Phi(A)=\infty\text{ if }\det A\leq 0,\\ 
	\Psi &\in \C^1 \quad \text{and} \quad |D\Psi(G)| \leq C(1+|G|)^{q-1},
	\end{aligned}
$$
they proved (using that in their setting $J_f\geq \delta >0$ for all contenders $f$) that the minimizer $f^*$ satisfies the weak Euler-Lagrange equation. We prove that this equations holds also in our situation if we are far away from the critical set $\{J_f=0\}$. 



\begin{prop}\label{EL}
	Let $q > n\geq 2$ and let $E$ be defined by \eqref{EnergyDef} where $\Psi, \Phi$ satisfy 
	\eqref{GrowDPsi}. 
	Assume that there a minimizer of \eqref{Problem} and call it $f^*$. Then $f^*$ satisfies
	\begin{equation}\label{ELeq}
	\int_{\Omega}D\Psi(D^2f^*(x))\cdot D^2\phi(x) + D\Phi(Df^*(x))\cdot D\phi(x) \, dx = 0
	\end{equation}
	for all $\phi \in W^{2,q}_0(\Omega\setminus\{J_{{f^*}}\neq 0\} ,\rn)$ with $D\Phi(Df^*)\cdot D\phi\in L^1(\Omega)$.
\end{prop}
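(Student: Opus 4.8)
The plan is to derive \eqref{ELeq} by the classical outer variation $f_t:=f^*+t\phi$, with the one essential caveat---forced by the constraint $J_f>0$ a.e.\ in \eqref{Problem}---that the perturbation must be confined to the region where $J_{f^*}$ is bounded away from zero. First I would prove the identity for $\phi\in C_c^\infty(\Omega\setminus\{J_{f^*}=0\},\rn)$ and only afterwards pass to a general admissible $\phi$ by approximation. Since $q>n$, Sobolev embedding makes $Df^*$ continuous, so $\{J_{f^*}=0\}$ is closed and $K_0:=\spt\phi$ is a compact subset of the open set $\{J_{f^*}\neq0\}$; hence $J_{f^*}\geq\delta$ on $K_0$ for some $\delta>0$. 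Moreover $D\phi\in L^\infty$ (again because $q>n$), so for $t_0>0$ small the matrices $Df^*(x)+tD\phi(x)$, $x\in K_0$, $|t|\le t_0$, stay in a compact set $\mathcal K\subset\{\det>0\}$ on which $\Phi$ and $D\Phi$ are bounded and continuous.

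I would then check that $f_t$ is an admissible competitor with $E(f_t)<\infty$ whenever $|t|\le t_0$: indeed $f_t\in W^{2,q}$, $f_t=f_0$ on $\partial\Omega$ because $\phi\in W^{2,q}_0$, and $J_{f_t}=J_{f^*}>0$ a.e.\ off $K_0$ while $J_{f_t}\geq\delta/2>0$ on $K_0$; finiteness of $E(f_t)$ follows by splitting $\Omega=K_0\cup(\Omega\setminus K_0)$, since off $K_0$ the integrand equals that of $E(f^*)<\infty$, whereas on $K_0$ one has $\Phi(Df_t)\leq\sup_{\mathcal K}\Phi<\infty$ and $\Psi(D^2 f_t)\leq|\Psi(0)|+C(1+|D^2 f_t|)^q\in L^1$, the latter by integrating the growth bound on $D\Psi$ in \eqref{GrowDPsi}.

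As $f^*$ is a minimizer, $t\mapsto E(f_t)$ has an interior minimum at $t=0$ on $[-t_0,t_0]$, so it is enough to show this scalar function is differentiable at $0$ with derivative equal to the left-hand side of \eqref{ELeq}. The difference quotients of $\Psi(D^2 f^*+tD^2\phi)$ and $\Phi(Df^*+tD\phi)$ converge pointwise a.e., by the mean value theorem together with the continuity of $D\Psi$ and $D\Phi$, to $D\Psi(D^2 f^*)\cdot D^2\phi$ and $D\Phi(Df^*)\cdot D\phi$; a dominating function for the first is $C(1+|D^2 f^*|+|D^2\phi|)^{q-1}|D^2\phi|$, which lies in $L^1$ by H\"older's inequality with exponents $\tfrac{q}{q-1}$ and $q$, and for the second it is $(\sup_{\mathcal K}|D\Phi|)\,|D\phi|\,\mathbf 1_{K_0}\in L^1$. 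Dominated convergence then gives $\tfrac{d}{dt}\big|_{t=0}E(f_t)=\int_\Omega \big(D\Psi(D^2 f^*)\cdot D^2\phi+D\Phi(Df^*)\cdot D\phi\big)\,dx=0$, i.e.\ \eqref{ELeq} for smooth compactly supported $\phi$.

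Finally, for a general $\phi\in W^{2,q}_0(\Omega\setminus\{J_{f^*}=0\},\rn)$ with $D\Phi(Df^*)\cdot D\phi\in L^1(\Omega)$ I would approximate $\phi$ by $\phi_k\in C_c^\infty(\Omega\setminus\{J_{f^*}=0\},\rn)$---for instance truncating $\phi$ by smooth cut-offs that vanish on shrinking neighbourhoods of $\{J_{f^*}=0\}\cup\partial\Omega$---arranged so that $\phi_k\to\phi$ in $W^{2,q}$ and $D\Phi(Df^*)\cdot D\phi_k\to D\Phi(Df^*)\cdot D\phi$ in $L^1$, and then pass to the limit in \eqref{ELeq} written for $\phi_k$. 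The $\Psi$-term converges because $D\Psi(D^2 f^*)\in L^{q/(q-1)}$ by the growth bound and $D^2\phi_k\to D^2\phi$ in $L^q$; the $\Phi$-term converges by the $L^1$-convergence just arranged. I expect the main obstacle to be exactly this last step: $D\Phi(Df^*)$ need not be bounded near the critical set, so the approximating sequence must be built to be compatible with the hypothesis $D\Phi(Df^*)\cdot D\phi\in L^1$---which is precisely the role of that hypothesis, and which is automatic as soon as $\spt\phi$ is a compact subset of $\{J_{f^*}\neq0\}$.
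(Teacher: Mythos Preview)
Your proposal is correct and follows essentially the same route as the paper: first establish \eqref{ELeq} for $\phi\in C_c^\infty(\Omega\setminus\{J_{f^*}=0\},\rn)$ via the outer variation $f^*+t\phi$, using $q>n$ to get continuity of $J_{f^*}$ and a uniform lower bound on the support of $\phi$, then pass to the limit by dominated convergence, and finally extend to general $\phi$ by density. The only cosmetic difference is that the paper writes the difference quotient as $\int_0^1\frac{\partial}{\partial t}[\cdots]\,dt$ rather than invoking the mean value theorem, and in the approximation step it fixes an intermediate open set $G$ with $\operatorname{dist}(G,\{J_{f^*}=0\})>0$ containing all the $\operatorname{supp}\phi_k$; your explicit acknowledgement that the $L^1$-convergence of $D\Phi(Df^*)\cdot D\phi_k$ is the delicate point matches the paper's (also somewhat terse) handling of that term.
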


\begin{proof}
As $E(f^*)$ is finite we find $J_{f^*}>0$ a.e.. 
	Let us first consider $\phi \in C^{\infty}_{c}(\Omega\setminus\{J_{f^*}=0\},\rn)$. The compact support of $\phi$ and the continuity of $J_{f^*}$ implies that there is $\delta>0$, such that 
	\[
	\min_{\{x\in \mathrm{supp}(\phi)\}}J_{f^*}=\delta
	\]
	Now let $h >0$. By the fact that $f^*$ is a minimizer we have that
	$$
	\begin{aligned}
	0 &\leq \frac{E(f^* + h\phi) - E(f^*)}{h}\\
	& = \int_{\Omega_{\delta}}\frac{\Psi(D^2f^* + hD^2\phi) - \Psi(D^2f^*) +  \Phi(Df^* + hD\phi) - \Phi(Df^*)}{h} \, dx\\
	& = \int_{\Omega_{\delta}}\int_0^1\frac{1}{h}\frac{\partial}{\partial t}\Big[\Psi(D^2f^* + htD^2\phi) +  \Phi(Df^* + htD\phi) \Big] \, dt \, dx\\
	& = \int_{\Omega_{\delta}} \int_0^1 D\Psi(D^2f^*(x) + htD^2\phi(x))D^2\phi(x) +  D\Phi(Df^*(x) + htD\phi(x))D\phi(x) \, dt \, dx.
	\end{aligned} 
	$$
	We know that $\|D\phi\|$ is bounded and therefore, there exists $h_0>0$ sufficiently small such that 
	$$
	\det\bigl(Df^*(x) +htD\phi(x) \bigr)\geq \tfrac{1}{2}\delta\text{ for all }t\in[0,1], h\in[-h_0,h_0]\text{ and all }x\in \{\phi\neq 0\}.
	$$ 
	Therefore the $D\Phi$ term is bounded irrespective of $t$ and $h$.
	
	Thanks to the fact that $D^2\phi$ is bounded we find $h_1>0$ such that
	$$
	|D^2f^*(x) + htD^2\phi(x)| \leq (1+|D^2f^*(x)|)
	$$
	for all $t\in[0,1]$, $h\in [-h_1,h_1]$ and all $x\in \{\phi\neq 0\}$. Then using \eqref{GrowDPsi} we have
	$$
	\bigl|D\Psi(D^2f^* + htD^2\phi)\cdot D^2\phi \bigr| \leq C(1+|D^2f^*|)^{q-1}
	$$
	which is integrable. Now we let $h\to 0$ and the dominated convergence theorem gives us 
	$$
	0 \leq \int_{\Omega_{\delta}}D\Psi(D^2f^*(x))D^2\phi(x) +  D\Phi(Df^*(x))D\phi(x) \, dx.
	$$
	The same argument, replacing $\phi$ with $-\phi$, shows \eqref{ELeq} for all $\phi\in \C_c^{\infty}(\Omega\setminus\{J_f=0\},\rn)$.
	
	Now assume that $\phi \in W^{2,q}(\Omega,\rn)$ and $\overline{\{\phi\neq 0\}}\cap (\partial\Omega\cup\{J_f=0\})=\emptyset$. We find $\phi_{k} \in \C_c^{\infty}$ such that $\phi_k \to \phi$ in $W^{2,q}(\Omega, \rn)$ and such that there is 
	$$
	G\subset\overline{G}\subset\Omega\text{ with }\dist\bigl(G,\{J_f=0\}\bigr)>0\text{ and }
	\{\varphi\neq 0\}\cup \bigcup_{k=1}^{\infty}\{\varphi_k\neq 0\}\subset G.
	$$
	Using \eqref{GrowDPsi} and the H\"older inequality we have
	$$
		\begin{aligned}
			\bigg|\int_{\Omega}D\Psi(D^2f^*)[D^2\phi -D^2\phi_k] \bigg|  
			&  \leq C\int_{\Omega}(1+|D^2f^*|)^{q-1}|D^2\phi -D^2\phi_k| \\
			& 	\leq C\Big[\int_{\Omega}(1+|D^2f^*|)^{q}\Big]^{\tfrac{q-1}{q}}\Big[\int_{\Omega}|D^2\phi -D^2\phi_k|^{q}\Big]^{1/q}
		\end{aligned}
	$$
	and this tends to zero as $k\to \infty$. Since $q>n$ we have that $\|D\phi -D\phi_k\|_{L^\infty(\Omega)} \to 0$ as $k \to \infty$. Since $\Phi$ is $\C^1$ on $\{A: \det A>0\}$, we have that $\|D\Phi(Df^*)\|_{L^{\infty}(G)} < \infty$. Then
	$$
			\Big|\int_{G}D\Phi(Df^*)[D\phi -D\phi_k] \Big| \leq C\|D\Phi(Df^*)\|_{L^\infty(G)}\int_{G} \big|D\phi -D\phi_k\big|
	$$
	and this tends to zero as $k\to \infty$. Since $G$ is arbritary we get a.e.\ convergence. Further, we may fix some $G$ that $|D\Phi(Df^*)||D\phi_k|$ is monotonously increasing on $\Omega\setminus G$. Then dominated convergence implies
	$$
			0 = \lim_{k\to \infty}\int_{\Omega}D\Psi(D^2f^*)D^2\phi_k + D\Phi(Df^*)D\phi_k\, dx
			 = \int_{\Omega}D\Psi(D^2f^*)D^2\phi +  D\Phi(Df^*)D\phi
	$$
	and we have proved the validity of \eqref{ELeq}.
\end{proof}

\subsection*{Data availability statement}
Data sharing not applicable to this article as no datasets were generated or analysed during the current study.

\end{document}